\theoremstyle{plain}
\newtheorem{theorem}{Theorem}
\newtheorem {lemma}{Lemma}
\newtheorem{proposition}{Proposition}
\newtheorem*{1'}{Theorem 1-Bessel}
\newtheorem*{P2'}{Proposition 2-Bessel}
\newtheorem*{P3'}{Proposition 3-Bessel}
\newtheorem*{P4'}{Proposition 4-Bessel}
\newtheorem*{C1'}{Corollary 1-Bessel}
\newtheorem*{2'}{Theorem 2-Bessel}
\newtheorem*{3'}{Theorem 3-Bessel}
\theoremstyle{remark}
\newtheorem*{Remark 1}{Remark 1}
\newtheorem*{Remark 2}{Remark 2}
\newtheorem*{Remark 3}{Remark 3}
\newtheorem*{Remark 4}{Remark 4}
\numberwithin{equation}{section}
\renewcommand{\baselinestretch}{1.35}
\begin{document}

\title[Longest alternating subsequence in  separable permutations] {Mean and variance of the  longest alternating subsequence in a random separable permutation}

\author{Ross G. Pinsky}

%\noindent  pinsky@math.technion.ac.il\ \ \ \ tel: 972-4-829-4083\ \ \  fax: 972-4-829-3388

\address{Department of Mathematics\\
Technion---Israel Institute of Technology\\
Haifa, 32000\\ Israel}
\email{ pinsky@technion.ac.il}
\urladdr{https://pinsky.net.technion.ac.il/}

\subjclass[2010]{60C05, 05A05} \keywords{pattern avoiding permutation, separable permutation,  longest alternating subsequence}
\date{}

\begin{abstract}

A permutation is \it separable \rm if it can be obtained from the singleton permutation   by iterating direct sums and skew sums. Equivalently, it is separable  if and only it avoids the patterns 2413 and 3142. Under the uniform probability on separable permutations of $[n]$, let the random variable $A_n$ denote the length of the longest alternating subsequence.
Also, let $A_n^{+,-}$ denote the length of the longest alternating subsequence that begins with an ascent and ends with a descent, and define $A_n^{-,+}, A_n^{+,+}, A_n^{-,-}$ similarly.
By symmetry, the first two and the last two of these latter four random variables are equi-distributed.
We prove that the expected value of any of these five random variables behaves asymptotically as $(2-\sqrt2)n\approx0.5858\thinspace n$.
We also obtained the more refined estimates
that the expected value of
$A_n^{+,-}$ and of $A_n^{-,+}$ is equal to
$(2-\sqrt2)n-\frac14(3-2\sqrt2)+o(1)$ and that the expected value of
$A_n^{+,+}$ and of $A_n^{-,-}$ is equal to
$(2-\sqrt2)n+\frac34(3-2\sqrt2)+o(1)$. Finally, we  show that
the variance of any of the four random variables $A_n^{\pm,\pm}$   behaves asymptotically as
$\frac{16-11\sqrt2}2n\approx0.2218\thinspace n$.

\end{abstract}

\maketitle
\section{Introduction and Statement of Results}\label{intro}
\renewcommand{\baselinestretch}{1.3}
Let $S_n$ denote the permutations of $[n]:=\{1,\cdots, n\}$. Given $\sigma\in S_k$ and $\tau\in S_l$,
the \it direct sum\rm\ of $\sigma$ and $\tau$ is the permutation in $S_{k+l}$ given by
$$
(\sigma\oplus\tau)(i)=\begin{cases}\sigma(i),\ i=1,\cdots, k;\\ \tau(i-k)+k,\ i=k+1,\cdots k+l,\end{cases}
$$
and the \it skew sum\rm\ $\sigma\ominus\tau$
is the permutation in $S_{k+l}$ given by
$$
(\sigma\ominus\tau)(i)=\begin{cases}\sigma(i)+l ,\ i=1,\cdots, k;\\ \tau(i-k),\ i=k+1,\cdots k+l.\end{cases}
$$
A permutation is \it indecomposable\rm\ if it cannot  be represented as the direct sum of two nonempty permutations and is \it skew  indecomposable \rm  if it cannot  be represented as the skew sum
of two nonempty permutations.
A permutation is \it separable \rm if it can be obtained from the singleton permutation   by iterating direct sums and skew sums.
Equivalently, a permutation is separable if it can be successively decomposed and skew decomposed until all of the indecomposable and skew  indecomposable pieces of the permutation are singletons.
For example, using one-line notation, consider the separable permutation  $\sigma=4352167$. It can be decomposed into $43521\oplus12$. Then 43521 can be skew decomposed into 213$\ominus$21 and 12 can be
decomposed into $1\oplus1$.
Now 213 can be decomposed into 21$\oplus$1 and  21 can be skew decomposed into 1$\ominus$1. Finally, again 21 can be skew decomposed into $1\ominus$1.

It is well-known \cite{BBL}   that a permutation is separable if and only it avoids the patterns 2413 and 3142. For more on pattern avoiding permutations, see for example \cite{B}.
%With the exception of  the class of permutations avoiding a particular pattern of length three, the class of separable permutation is probably the most studied class of pattern avoiding permutations.
The fact that separable permutations  can be enumerated by a closed form generating function (see section \ref{prelim}) makes them rather tractable to analyze.
The study of general pattern avoiding permutations goes back to Knuth's observation \cite{K} that a permutation is so-called stack sortable if and only if it 231-avoiding.  Similarly, the study of separable permutations goes back to
\cite{AN} where it was shown that these are precisely the permutations which are sortable by so-called pop stacks.
Separable permutations also arise in a variety of other applications, for example in bootstrap percolation \cite{SS}
and in connection to   polynomial interchanges where one studies the possible ways that the relative order of the values of a family of polynomials can be modified when crossing a common zero \cite{G}.

Let $\text{SEP}(n)$ denote the set of separable  permutations in $S_n$, and let $P^{\text{sep}}_n$ and $E^{\text{sep}}_n$ denote respectively the uniform probability measure on  SEP$(n)$ and the expectation with respect to that measure.
In this paper we study the length of the longest alternating subsequence in a random separable  permutation.
 An alternating subsequence of length $k$ in a permutation $\sigma=\sigma_1\cdots\sigma_n\in S_n$ is a subsequence of the form
$\sigma_{i_1}>\sigma_{i_2}<\sigma_{i_3}>\cdots\sigma_{i_k}$
or $\sigma_{i_1}<\sigma_{i_2}>\sigma_{i_3}<\cdots\sigma_{i_k}$,
 where $1\le i_1<\cdots<i_k\le n$.
%Call the first type an initially descending alternating subsequence and call the second type an initially ascending alternating subsequence.
%Of course, for asymptotic results concerning the longest alternating subsequence, it doesn't matter which type one considers since the two differ from one another by at most one.
Let $A_n=A_n(\sigma)$ denote the length of the longest alternating subsequence in a permutation $\sigma\in S_n$. Stanley \cite{S} showed that for a uniformly random permutation in $S_n$,
the expectation of $A_n$ is asymptotic to $\frac23 n$ and the variance is asymptotic to $\frac8{45}n$.
%It has been proven that  a central limit theorem holds with a Gaussian limiting distribution
%\cite{W,St}.
The length of the  longest alternating subsequence in a random permutation avoiding a pattern of length three was studied in \cite{FMW}. It  was proven that
the expectation of $A_n$  in a random $\eta$-avoiding permutation in $S_n$ is asymptotic to $\frac12 n$  and
 the variance
is asymptotic to $\frac14n$,  for any pattern $\eta\in S_3$.

We will prove the following theorem.
\begin{theorem}\label{mean}
\begin{equation}\label{meanasymp}
E^{\text{\rm sep}}_nA_n\sim(2-\sqrt2)n\approx .5858\thinspace n.
\end{equation}
\end{theorem}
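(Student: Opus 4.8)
The plan is to reduce the longest alternating subsequence to a run statistic and then to analyze that statistic through the recursive direct-sum/skew-sum structure of separable permutations by means of generating functions.

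\emph{Reduction.} For a permutation $\sigma$ of length $n\ge 2$ I would encode its consecutive comparisons by the sign word $d(\sigma)\in\{+,-\}^{n-1}$, whose $i$-th letter is $+$ if $\sigma_i<\sigma_{i+1}$ and $-$ otherwise. A greedy argument gives $A_n(\sigma)=1+r(\sigma)$, where $r(\sigma)$ is the number of maximal constant runs of $d(\sigma)$ (equivalently, $A_n(\sigma)=2$ plus the number of interior local extrema), so it suffices to control the distribution of $r$. The decisive point is how $d$ behaves under the two generating operations: in $\sigma\oplus\tau$ every entry of $\sigma$ lies below every entry of $\tau$, so the junction contributes a single $+$ and $d(\sigma\oplus\tau)=d(\sigma)\,(+)\,d(\tau)$; dually $d(\sigma\ominus\tau)=d(\sigma)\,(-)\,d(\tau)$. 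Hence $r$ is additive across a junction up to a correction determined solely by the last letter of $d(\sigma)$ and the first letter of $d(\tau)$ (two equal adjacent letters merge two runs). This is precisely why one must record the boundary signs, and it is the combinatorial source of the refined quantities $A_n^{\pm,\pm}$.

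\emph{Generating functions.} For each pair $(\epsilon,\delta)\in\{+,-\}^2$ I would introduce $S_{\epsilon,\delta}(x,u)=\sum x^{|\sigma|}u^{r(\sigma)}$, summed over separable $\sigma$ with $|\sigma|\ge 2$ whose sign word begins with $\epsilon$ and ends with $\delta$, together with the singleton term $x$. Using the classical fact that a non-singleton separable permutation is uniquely either $\oplus$-decomposable or $\ominus$-decomposable, and in the former case factors uniquely as a sequence of at least two $\oplus$-indecomposable (hence singleton-or-$\ominus$-decomposable) separable blocks, and dually, I would write the corresponding sequence decompositions, insert the junction letters ($+$ for $\oplus$, $-$ for $\ominus$), and incorporate the run-merging corrections from the reduction step. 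This yields a closed system of algebraic functional equations (quadratic, through the geometric series $\sum_{j\ge 2}$) coupling the four $S_{\epsilon,\delta}$. The complement symmetry $+\leftrightarrow-$ (which swaps $\oplus$ and $\ominus$) and the reverse symmetry collapse the system to very few unknowns and already yield the equidistributions $A_n^{+,-}\stackrel{d}{=}A_n^{-,+}$ and $A_n^{+,+}\stackrel{d}{=}A_n^{-,-}$ recorded in the abstract.

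\emph{Solving and singularity analysis.} At $u=1$ the system must recover the known separable generating function $S(x)=\frac12\bigl(1-x-\sqrt{1-6x+x^2}\bigr)$, whose dominant singularity is the square-root branch point at $\rho=3-2\sqrt2$; this explains the appearance of $3-2\sqrt2$ in the refined constants. Solving the $u$-deformed system expresses the $S_{\epsilon,\delta}(x,u)$ as explicit algebraic functions with a square-root-type singularity whose location depends on $u$. To obtain the mean I would differentiate the total generating function $G(x,u)=x+\sum_{\epsilon,\delta}S_{\epsilon,\delta}(x,u)$ in $u$ at $u=1$: since $r$ is an additive-type parameter on a class with a movable square-root singularity, $\partial_uG(x,u)\big|_{u=1}$ acquires a singularity of type $(1-x/\rho)^{-1/2}$. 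Transfer theorems then give $[x^n]\,\partial_uG|_{u=1}\sim c\,\rho^{-n}n^{-1/2}$ against $[x^n]S(x)\sim c'\rho^{-n}n^{-3/2}$, so $E^{\text{sep}}_nA_n=1+[x^n]\partial_uG|_{u=1}\big/[x^n]S(x)$ grows linearly, and computing the ratio of the two leading constants gives $2-\sqrt2$; the $O(1)$ corrections and the variance come from the next orders of the same expansion. For the leading asymptotics of Theorem~\ref{mean} alone one only needs the total count of runs, but the four-type refinement is what additionally delivers the exact $O(1)$ terms and the variance. The main obstacle is this setup-and-solve stage: writing the junction corrections correctly for every boundary-type combination, handling the degenerate singleton blocks (which carry an empty sign word), and disentangling the coupled nonlinear system in closed form. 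Once the explicit expressions and the location and type of their dominant singularity are in hand, the passage to the asymptotics is routine.
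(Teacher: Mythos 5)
Your proposal is correct in outline and would prove the theorem, but it gets there by a genuinely different organization of the same underlying engine. Your key lemma is the reduction $A_n(\sigma)=1+r(\sigma)$, where $r$ is the number of maximal runs of the descent word, plus the observation that $\oplus$ and $\ominus$ insert a junction letter $+$ or $-$ with a boundary-dependent run-merging correction; the paper never passes through the run statistic, instead working directly with the four quantities $A_n^{\pm,\pm}$ via the distributional concatenation identity \eqref{A++cond}, conditioning only on the \emph{first} indecomposable block (the law \eqref{Bprob} of $B_1^{+,n}$) rather than on your full sequence-of-indecomposables factorization, and closing the system with the reverse-complement symmetries of Lemma \ref{revcomplemma} so that only $c^{+,-}$ and $c^{-,-}$ survive. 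Your boundary-letter bookkeeping and your flagged issue of singleton blocks with empty sign word are the exact counterparts of the paper's tracking of the start/end type of the alternating subsequence and its convention \eqref{crucialdef}. Where you deform with a catalytic variable $u$ marking $r$ and differentiate at $u=1$, the paper writes the first-moment generating functions $G^{\pm,\pm}$ directly and solves a linear system to get the closed form \eqref{Ggenmeanform}, $G^{+,-}(t)=t(1-t)(t^2-6t+1)^{-1/2}$, which is precisely the square-root-in-the-denominator singularity at $\rho=3-2\sqrt2$ that you predict for $\partial_u G|_{u=1}$; and where you invoke transfer theorems, the paper proves the needed coefficient asymptotics by hand (Propositions \ref{snasympprop} and \ref{oneover}) through explicit convolutions of binomial series. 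Your route buys generality and routine asymptotics --- the bivariate system in principle carries the whole distribution of $A_n$, and singularity analysis automates the coefficient extraction --- while the paper's buys self-containedness and, more importantly, the explicit closed forms and refined lower-order expansions that Theorem \ref{var} requires. The one promissory note in your write-up is the assertion that solving the coupled system and taking the ratio of leading constants yields $2-\sqrt2$: that algebra is the real labor, and the paper's computation from \eqref{genfuncequa--} through \eqref{simplificationG+-}, followed by the ratio $t^n[G^{+,-}]/s_n$ in \eqref{almostfinalmean}, confirms it comes out as you claim.
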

\noindent \bf Remark.\rm\  Since  a separable permutation is one that avoids the two patterns 2413 and 3142, and since these patterns are alternating, it is not surprising that the expected length of the
longest alternating subsequence in a random separable permutation  is shorter than that of a uniformly random permutation.  Note
though that it is longer than that of a random permutation avoiding any particular pattern of length three. (See the paragraph preceding Theorem \ref{mean}.)

For the proof of Theorem \ref{mean} and also in order to state a result for  the asymptotic behavior of the variance,
we need to differentiate between four  types of alternating sequences---begin with an ascent and end with an ascent; begin with an ascent and end with a descent;  begin
with a descent and end with an ascent; begin with a descent and end with a descent.
 An alternating subsequence of length $k$ which begins with an ascent and ends with an ascent  in a permutation $\sigma=\sigma_1\cdots\sigma_n\in S_n$ is a subsequence of the form
$\sigma_{i_1}<\sigma_{i_2}>\cdots <\sigma_{i_k}$. The other three types are defined similarly.
%We note that  a subsequence of the form $\sigma_{i_1}<\sigma_{i_2}$ begins with an ascent and ends with an ascent.

Denote the length of the longest alternating subsequence of each of the four types of alternating subsequences by $A_n^{\pm,\pm}$.
Since the difference of any two of the random variables $A_n^{\pm,\pm}$ is bounded by two, Theorem \ref{mean} holds with $A_n$ replaced by any of the four random variables $A_n^{\pm,\pm}$.
From symmetry considerations, it is clear that
\begin{equation}\label{A+-distributions}
A_n^{+,+}\stackrel{\text{dist}}{=}A_n^{-,-};\ \ \  A_n^{+,-}\stackrel{\text{dist}}{=}A_n^{-,+}.
\end{equation}

We will prove the following theorem  which refines Theorem \ref{mean} and gives the asymptotic behavior of the variance.
\begin{theorem}\label{var}
\noindent i.
\begin{equation}\label{+-meanrefine}
\begin{aligned}
&E^{\text{\rm sep}}_nA_n^{+,-}=E^{\text{\rm sep}}_nA_n^{-,+}=
(2-\sqrt2)n-\frac14(3-2\sqrt2)+o(1);
\end{aligned}
\end{equation}
%\noindent ii.
%\begin{equation}\label{+-2ndmoment}
%\begin{aligned}
%&E^{\text{\rm sep}}_n(A_n^{+,-})^2=(6-4\sqrt2)n^2+(3-2\sqrt2)n+O(1);\\
%&E^{\text{\rm sep}}_n(A_n^{+,+})^2=(6-4\sqrt2)n^2+(5-2\sqrt2)(3-2\sqrt2)n+O(1).
%\end{aligned}
%\end{equation}
\begin{equation}\label{varasymp}
\begin{aligned}
&\text{Var}_n^{\text{\rm sep}}(A_n^{+,-})=\text{Var}_n^{\text{\rm sep}}(A_n^{-,+})\sim\frac{16-11\sqrt2}2n\approx0.2218\thinspace n.
\end{aligned}
\end{equation}
ii. \begin{equation}\label{--meanrefine}
\begin{aligned}
&E^{\text{\rm sep}}_nA_n^{+,+}=E^{\text{\rm sep}}_nA_n^{-,-}=(2-\sqrt2)n+\frac34(3-2\sqrt2)+o(1);
\end{aligned}
\end{equation}
\begin{equation}\label{varasympagain}
\begin{aligned}
&\text{Var}_n^{\text{\rm sep}}(A_n^{+,+})=\text{Var}_n^{\text{\rm sep}}(A_n^{-,-})\sim\frac{16-11\sqrt2}2n\approx0.2218\thinspace n.
\end{aligned}
\end{equation}
\end{theorem}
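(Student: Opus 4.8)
The plan is to reduce the study of the four statistics $A_n^{\pm,\pm}$ to a single, purely local statistic and then to analyze that statistic by singularity analysis of a suitably refined generating function for separable permutations. First I would record the crucial observation, going back to Stanley's analysis of alternating subsequences, that the longest alternating subsequence is governed by the run structure of $\sigma$: if $r(\sigma)$ denotes the number of maximal monotone (ascending or descending) runs, then $A_n(\sigma)=r(\sigma)+1$, and each signed variant equals $r(\sigma)$ plus a bounded correction determined solely by whether the first comparison sign $t_1$ and the last comparison sign $t_r$ of $\sigma$ match the prescribed boundary types. Writing $s,t\in\{+,-\}$ for the prescribed start/end signs, one has $A_n^{s,t}=r-1+[s=t_1]+[t=t_r]$ once $\sigma$ has at least two runs (the monotone and tiny permutations contributing only $O(1)$ corrections). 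Since this correction is bounded, $\mathrm{Var}^{\text{sep}}_n(A_n^{s,t})=\mathrm{Var}^{\text{sep}}_n(r)+O(\sqrt n)$ for every $(s,t)$, so all four variances share the same leading asymptotics; and by linearity $E^{\text{sep}}_n A_n^{s,t}=E^{\text{sep}}_n r-1+P(t_1=s)+P(t_r=t)$, so the refined means follow from the constant term of $E^{\text{sep}}_n r$ together with the limiting marginals of the boundary signs.

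Next I would set up the enumeration using the recursive structure: every separable permutation of size at least two is uniquely either a direct sum or a skew sum of sum- (resp.\ skew-) indecomposable separable blocks. I would introduce $F(x,q)=\sum_\sigma x^{|\sigma|}q^{r(\sigma)}$, refined into four series $F^{s,t}(x,q)$ according to the boundary signs $(t_1,t_r)$, with the singleton treated separately. The run count is not additive under $\oplus$, but it merges in a controlled way: in $\sigma\oplus\tau$ the junction between the blocks is always an ascent, whence $r(\sigma\oplus\tau)=r(\sigma)+r(\tau)-1+[t_r(\sigma)=-]+[t_1(\tau)=-]$, and dually for $\ominus$ with a descent junction. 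Translating these merging rules into the block decomposition yields a closed algebraic system for the $F^{s,t}$; the reverse, complement and inverse symmetries of separable permutations collapse the boundary types and reduce the system to a single quadratic equation for $F(x,q)$ of Schr\"oder type, specializing at $q=1$ to the classical large-Schr\"oder generating function with dominant square-root singularity at $\rho=3-2\sqrt2$.

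The asymptotics then follow from the movable-singularity (quasi-powers) schema. For $q$ near $1$ the equation for $F(x,q)$ has a simple square-root singularity at a point $\rho(q)$ depending analytically on $q$, with $\rho(1)=3-2\sqrt2$. By the algebraic-singularity Gaussian limit law (Flajolet--Sedgewick, Drmota), $r$ is asymptotically normal under $P^{\text{sep}}_n$ with $E^{\text{sep}}_n r\sim-\frac{\rho'(1)}{\rho(1)}\,n$ and $\mathrm{Var}^{\text{sep}}_n(r)\sim\left(-\frac{\rho''(1)}{\rho(1)}-\frac{\rho'(1)}{\rho(1)}+\Big(\tfrac{\rho'(1)}{\rho(1)}\Big)^2\right)n$. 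I would compute $\rho(q)$ to second order at $q=1$ by implicitly differentiating the algebraic equation at the singular point, obtaining $-\rho'(1)/\rho(1)=2-\sqrt2$ (re-proving Theorem \ref{mean}) and, after simplification, the variance constant $\frac{16-11\sqrt2}2$. The constant term in $E^{\text{sep}}_n r$ and the limiting boundary probabilities $P(t_1=\pm),P(t_r=\pm)$ — the latter read off from the $q=1$ specialization of the $F^{s,t}$ by a standard transfer theorem — are then assembled through $A_n^{s,t}=r-1+[s=t_1]+[t=t_r]$ to yield the constants $-\frac14(3-2\sqrt2)$ and $\frac34(3-2\sqrt2)$.

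The main obstacle is twofold. First, arranging the boundary-refined marked system so that the run count is tracked \emph{exactly} requires careful handling of the junction-merging rules and of the degenerate small blocks, where the $\pm1$ corrections live; a sign slip there corrupts the constant terms in the refined means. Second, and more seriously, the variance constant demands the second-order coefficient $\rho''(1)$, i.e.\ a double implicit differentiation of the algebraic system at the square-root singularity, where the leading $n^2$ contributions to $E^{\text{sep}}_n r^2$ must cancel against $(E^{\text{sep}}_n r)^2$ so that the genuine $\Theta(n)$ term survives; this is the step most prone to error. One must also verify the hypotheses of the limit law — analyticity and simplicity of the movable singularity, aperiodicity, and non-degeneracy $\sigma^2>0$ — to make the Gaussian schema, and hence the variance asymptotics, rigorous. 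A useful internal check is that the two refined-mean constants differ by exactly $(3-2\sqrt2)$, which must equal the limiting value of $P(t_r=+)-P(t_r=-)$.
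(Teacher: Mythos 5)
Your route is genuinely different from the paper's. The paper never mentions runs: it derives recursions for the conditional moments $c_n^{\pm,\pm}=E_n^{\text{sep}}(A_n^{\pm,\pm}\,|\,B_1^{+,n}=n)$ and $C_n^{\pm,\pm}$, converts them into the explicit closed-form generating functions of Theorems \ref{Ggenmeanthm} and \ref{Hgenvarthm}, and extracts two-term coefficient asymptotics by hand in the appendix. Your plan --- reduce everything to the run count $r$ via $A_n^{s,t}=r+1-[s\ne t_1]-[t\ne t_r]$ and then run the movable-singularity/quasi-powers schema on a boundary-refined bivariate generating function --- would, if executed, buy a Gaussian limit law as a bonus, which the paper does not obtain. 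The execution gaps are real, though: you never derive the algebraic system (your junction-merging rule degenerates for singleton blocks, where $t_1,t_r$ are undefined --- precisely the spot the paper handles with the convention \eqref{crucialdef}); you do not verify the hypotheses of the Gaussian schema; and the $o(1)$ constants in the means require a uniform two-term singular expansion in the spirit of \eqref{snasymprefine} and \eqref{oneoverasymprefine}, not just the limit law. That residual work is comparable in volume to the paper's sections 3--6 and appendix.

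The more important point is that your own ``internal check,'' taken seriously, shows your plan cannot reproduce part (ii) as stated. Complementation is a bijection of $\text{SEP}(n)$ that preserves $r$ and flips every adjacent comparison, so $P_n^{\text{sep}}(t_1=s)=P_n^{\text{sep}}(t_r=t)=\frac12$ exactly; combined with your boundary identity (which is correct for $n\ge2$ under the paper's conventions, by the standard exchange/upper-bound argument), linearity gives $E_n^{\text{sep}}A_n^{s,t}=E_n^{\text{sep}}r$ for all four sign patterns and every $n\ge2$, so the four refined means coincide \emph{exactly}. This is irreconcilable with the pair of constants $-\frac14(3-2\sqrt2)$ in \eqref{+-meanrefine} and $+\frac34(3-2\sqrt2)$ in \eqref{--meanrefine}. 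Small cases support your identity: at $n=3$ one has $\text{SEP}(3)=S_3$ and a direct count gives $E_3A_3^{+,-}=E_3A_3^{-,-}=\frac53$. The discrepancy traces to the first line of the paper's proof of part (ii), which sets $E_n^{\text{sep}}A_n^{-,-}=t^n[G^{-,-}]/s_n=c_n^{-,-}$; in fact $E_n^{\text{sep}}A_n^{-,-}=\frac12c_n^{-,-}+\frac12d_n^{-,-}$, and Lemma \ref{revcomplemma} only gives $d_n^{-,-}=c_n^{+,+}$, which differs from $c_n^{-,-}$ (already $c_2^{-,-}=2$ while $c_2^{+,+}=0$; at $n=3$ the paper's identification would give $c_3^{-,-}=2\ne\frac53$). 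Part (i) does not suffer from this, because Lemma \ref{revcomplemma}(iii) supplies the extra symmetry $c^{+,-}=c^{-,+}$ needed to identify the unconditional mean with the conditional one. So if you carry your program through, expect all four means to come out as $(2-\sqrt2)n-\frac14(3-2\sqrt2)+o(1)$: your method would \emph{correct} \eqref{--meanrefine} rather than confirm it, and you should say so rather than treat the constant-difference check as a validation.

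The variance claims are unaffected by this issue and your reduction handles them efficiently: since $|A_n^{s,t}-r|\le1$ pointwise, all four variances equal $\text{Var}_n^{\text{sep}}(r)+O(\sqrt n)$, so \eqref{varasymp} and \eqref{varasympagain} follow with the common constant $\frac{16-11\sqrt2}2$ once the $\rho''(1)$ computation is actually carried out --- which, together with the non-degeneracy verification, is the one substantive calculation your proposal still owes.
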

\noindent\bf Remark.\rm\ From Theorem \ref{var}, it follows that from the limited perspective of mean and variance,
$A_n^{+,+}$ or $A_n^{-,-}$ behaves as a deterministic translation of $A_n^{+,-}$ or $A_n^{-,+}$ by $3-2\sqrt2$.
We note that the quantity $3-2\sqrt2$ plays a fundamental role in the proofs;  it is one of the roots of the generating function corresponding to the count of separable permutations--see section \ref{prelim}.
\medskip

In section \ref{prelim} we present some preliminary material on separable permutations and on alternating subsequences.
In section \ref{genfuncsmean}, we define two generating functions related to the mean of the length of the longest alternating sequence and evaluate them explicitly.
Using one of these generating functions, we prove  Theorem \ref{mean} in section \ref{meanproof}.
In section \ref{genfuncs2nd}, we
define two  generating functions related to the second moment of the length of the longest  alternating sequence and evaluate them explicitly.
Using all four of the above noted generating functions, we prove Theorem \ref{var} in section \ref{varproof}.
In the appendix we state and prove three propositions,  concerning the asymptotic behavior as $n\to\infty$ of the coefficient of $t^n$ in the power series for the functions
$(t^2-6t+1)^{\frac m2}$, for $m\in\{1,-1,-3\}$. These results are critical for the proofs of Theorems \ref{mean} and \ref{var}. More precisely,
formulas \eqref{snasympappendix} from Proposition \ref{snasympprop} and \eqref{oneoverasymp} from Proposition \ref{oneover} are needed for the proof of Theorem \ref{mean}, while their more precise versions, \eqref{snasymprefine} and \eqref{oneoverasymprefine}, along with formula \eqref{-32asymp} in Proposition \ref{-32}
are needed for the proof of Theorem \ref{var}.

\section{Preliminary material}\label{prelim}
Let $s_n=|\text{SEP}(n)|,\ n\ge1$, denote the number of separable permutations in $S_n$.
% and define for convenience $s_0=1$.
Let
$$
s(t)=\sum_{n=1}^\infty s_nt^n
$$
denote the generating function of $\{s_n\}_{n=1}^\infty$.
For a separable permutation, define the length of the first indecomposable block and the length of the first skew indecomposable block respectively by
\begin{equation}\label{Bdef}
\begin{aligned}
&B_1^{+,n}(\sigma)=\min\{j:\sigma([j])=[j]\}, \ \sigma\in \text{SEP}(n);\\
&B_1^{-,n}(\sigma)=\min\{k:\sigma([k])=[n]-[n-k]\}, \sigma\in \text{SEP}(n).
\end{aligned}
\end{equation}
%Let $B_1^{+,n}(\sigma)$ denote the corresponding permutation  of
%the block \newline $\big[1,|B_1^{+,n}(\sigma)|\big]$ (the first $|B_1^{+,n}(\sigma)|$ entries in $\sigma$),
%and let
%$B_1^{-,n}(\sigma)$ denote the corresponding permutation  of
%the block $\big[n-|B_1^{-,n}(\sigma)|+1, n\big]$ (the first $|B_1^{-,n}(\sigma)|$ entries in $\sigma$).
%Note that, by construction, the permutation
%$B_1^{+,n}(\sigma)$  is indecomposable
%and the permutation  $B_1^{-,n}(\sigma)$ is skew decomposable.
By the definition of separable permutations,
for each $\sigma\in \text{SEP}(n)$, with $n\ge2$, exactly one out of
$B_1^{+,n}(\sigma)$ and $B_1^{-,n}(\sigma)$ is equal to $n$,
and by symmetry,
\begin{equation}\label{sym}
|\{\sigma\in \text{SEP}(n):B_1^{+,n}(\sigma)=n|=|\{\sigma\in \text{SEP}(n):B_1^{-,n}(\sigma)=n|=
\frac12 s_n,\ n\ge2.
\end{equation}
That is, half of the permutations in SEP$(n)$, $n\ge2$,  are indecomposable and half are skew indecomposable.
Partitioning SEP$(n)$ by $\{B_1^{+,n}=j\}_{j=1}^n$ (or alternatively, by $\{B_1^{-,n}=j\}_{j=1}^n$), and
using  the concatenating structure of separable permutations, it  follows that
\begin{equation}\label{rec}
s_n=s_1s_{n-1}+\frac12\sum_{j=2}^{n-1}s_js_{n-j}+\frac12 s_n,\ n\ge2,
\end{equation}
and
\begin{equation}\label{Bprob}
P_n^{\text{sep}}(B_1^{+,n}=j)=P_n^{\text{sep}}(B_1^{-,n}=j)=\begin{cases} \frac{s_1s_{n-1}}{s_n}; j=1;\\ \frac{\frac12s_js_{n-j}}{s_n}, j=2\cdots n-1;\\ \frac12, j=n.\end{cases}
\end{equation}
From \eqref{rec}  it is straightforward to show that
\begin{equation}\label{sgenfunc}
s(t)=\frac12(1-t-\sqrt{t^2-6t+1}\thinspace),\ \text{for}\ |t|<3-2\sqrt{2}.
\end{equation}
(Multiply both sides of \eqref{rec} by $t^n$ and sum over $n$ from 2 to $\infty$, and then solve for $s(t)$.)
Using the above formula for  the generating function,  one can prove  that
\begin{equation*}\label{snasymp}
s_n\sim\frac1{2^\frac34\sqrt{\pi n^3}}(3-2\sqrt2)^{-n+\frac12}.
\end{equation*}
We prove this asymptotic formula as well as a more refined version in Proposition  \ref{snasympprop}. We note that
in \cite[p. 474-475]{FS}, the above formula appears with a mistake---instead of $2^\frac34$, one find there 2.
(Our $s_n$ is equal to their $D_{n-1}$.)   The sequence of integers $\{s_n\}_{n=1}^\infty$ is known as the sequence of big Schr\"oder numbers; see A006318 in the \it On-Line Encyclopedia of Integer Sequences\rm.

As noted before the statement of Theorem \ref{var},
we differentiate between four  types of alternating sequences, depending on whether they begin with an ascent or a descent and whether they end with an ascent or a descent,
and denote the length of the longest one of each type  by $A_n^{\pm,\pm}$.
We note that  a subsequence of length two of the form $\sigma_{i_1}<\sigma_{i_2}$ begins with an ascent and ends with an ascent.
We will derive recursion formulas that allow us to obtain generating functions in explicit forms.
In order to make the formulas work, the following definition will be crucial:
\begin{equation}\label{crucialdef}
\begin{aligned}
&\text{A singleton}\ \sigma_{i_1}\ \text{is considered an alternating sequence both of the type}\ (+,-)\\
& \text{and of the type}\ (-,+);\\
&\text{A singleton}\ \sigma_{i_1}\ \text{is not considered an alternating sequence  of the type}\ (+,+)\\
& \text{or of the type}\ (-,-).\\
\end{aligned}
\end{equation}
In light of \eqref{crucialdef},
note that
\begin{equation}\label{n=1or2}
\begin{aligned}
& A_1^{+,+}=A_1^{-,-}\equiv0;\ \ \
A_1^{+,-}=A_1^{-,+}\equiv1;\\
&A_2^{+,-}=A_2^{-,+}\equiv1\\
&A_2^{+,+}(\sigma)=\begin{cases}2, \ \sigma=12,\\ 0, \ \sigma=21;\end{cases}\ \ \
A_2^{-,-}(\sigma)=\begin{cases}0, \ \sigma=12,\\ 2, \ \sigma=21.\end{cases}
\end{aligned}
\end{equation}
\noindent \bf Remark.\rm\ Note that the definition 
\eqref{crucialdef} ensures that $A_n^{+,+}$ and $A_n^{-,-}$ take on only even values and that $A_n^{+,-}$ and $A_n^{-,+}$ take on only odd values.
\medskip

The following  lemma will play an important role. Let $P^{\text{\rm sep}}_n(\cdot\ |B_1^{+;n}=n)$ \big($P^{\text{\rm sep}}_n(\cdot\ |B_1^{-;n}=n)$\big) denote the distribution
$P^{\text{\rm sep}}_n$ conditioned on $\{B_1^{+;n}=n\}$ \big(\{$B_1^{-;n}=n$\}\big).
\begin{lemma}\label{revcomplemma}
\noindent i. The distribution of $A_n^{+,+}$ \big($A_n^{-,-}$ \big) under    $P^{\text{\rm sep}}_n(\cdot\ |B_1^{+;n}=n)$ coincides with the distribution of $A_n^{-,-}$ \big( $A_n^{+,+}$ \big) under
 $P^{\text{\rm sep}}_n(\cdot\ |B_1^{-;n}=n)$;

 \noindent ii. The distribution of $A_n^{+,-}$ \big($A_n^{-,+}$\big)  under   $P^{\text{\rm sep}}_n(\cdot\ |B_1^{+;n}=n)$  coincides with the distribution of $A_n^{-,+}$
 \big($A_n^{+,-}$\big) under
  $P^{\text{\rm sep}}_n(\cdot\ |B_1^{-;n}=n)$;

\noindent iii. The distributions of $A_n^{+,-}$ and $A_n^{-,+}$ coincide under  $P^{\text{\rm sep}}_n(\cdot\ |B_1^{+;n}=n)$.
\end{lemma}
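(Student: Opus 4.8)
The plan is to deduce all three distributional identities from two natural involutions on permutations, the \emph{complement} $\sigma^c(i)=n+1-\sigma(i)$ and the \emph{reverse--complement} $\sigma^{rc}(i)=n+1-\sigma(n+1-i)$, each of which preserves separability and acts in a controlled way both on the first--block structure \eqref{Bdef} and on the type of an alternating subsequence. Since the patterns $2413$ and $3142$ are interchanged by complementation and each fixed by reverse--complementation, both maps carry $\text{SEP}(n)$ bijectively onto itself, and hence, $P^{\text{sep}}_n$ being uniform, each preserves it.

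The first step is to record the effect on the conditioning events. Here I would verify the block identities $(\alpha\oplus\beta)^c=\alpha^c\ominus\beta^c$ and $(\alpha\oplus\beta)^{rc}=\beta^{rc}\oplus\alpha^{rc}$. The former shows that $\sigma\mapsto\sigma^c$ interchanges decomposable and skew--decomposable permutations, so it interchanges the events $\{B_1^{+;n}=n\}$ and $\{B_1^{-;n}=n\}$ and therefore pushes $P^{\text{sep}}_n(\cdot\mid B_1^{+;n}=n)$ forward to $P^{\text{sep}}_n(\cdot\mid B_1^{-;n}=n)$. The latter shows that $\sigma\mapsto\sigma^{rc}$ preserves indecomposability, so it fixes the event $\{B_1^{+;n}=n\}$ and leaves $P^{\text{sep}}_n(\cdot\mid B_1^{+;n}=n)$ invariant.

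The second step records the effect on the four decorated statistics. Complementation reverses every value comparison, turning each ascent into a descent and conversely; reading off the signs of consecutive comparisons, it interchanges the begin and end decorations and yields, for every $\sigma\in\text{SEP}(n)$, the pointwise identities $A_n^{+,+}(\sigma)=A_n^{-,-}(\sigma^c)$ and $A_n^{+,-}(\sigma)=A_n^{-,+}(\sigma^c)$. Reverse--complementation is the composite of the reverse, whose effect on a subsequence is to reverse \emph{and} negate its string of comparison signs (so that it fixes the $(+,-)$ and $(-,+)$ types), with the complement, which negates all signs; the two negations cancel, leaving only the reversal of the sign string, and this gives $A_n^{+,-}(\sigma)=A_n^{-,+}(\sigma^{rc})$. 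Combining each pointwise identity with the corresponding measure statement from the first step then yields parts i and ii from the complement and part iii from the reverse--complement.

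The individual steps are short, so the main obstacle I anticipate is purely combinatorial bookkeeping: getting the sign calculus exactly right, in particular confirming that the reverse fixes (rather than flips) the $(+,-)$ and $(-,+)$ types, so that after composing with the complement one lands on the opposite type while staying inside the \emph{same} conditioning class, and checking that the singleton convention \eqref{crucialdef} is compatible with each map—the type pairs $\{(+,-),(-,+)\}$ and $\{(+,+),(-,-)\}$ are each preserved by the relevant conventions—so that the identities hold for the extremal lengths and not merely for generic subsequences.
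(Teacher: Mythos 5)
Your proposal is correct and follows essentially the same route as the paper: the paper likewise uses the complement bijection between $\{B_1^{+,n}=n\}$ and $\{B_1^{-,n}=n\}$ for parts (i) and (ii), and the reverse--complement self-bijection of $\{B_1^{+,n}=n\}$ together with the identity $A_n^{+,-}(\sigma)=A_n^{-,+}(\sigma^{\text{rev-com}})$ for part (iii). The only cosmetic difference is that the paper obtains the reverse--complement identity by composing $A_n^{+,-}(\sigma)=A_n^{+,-}(\sigma^{\text{rev}})$ with $A_n^{+,-}(\sigma)=A_n^{-,+}(\sigma^{\text{com}})$, while you argue directly via the sign-string calculus and verify the block identities $(\alpha\oplus\beta)^{c}=\alpha^{c}\ominus\beta^{c}$ and $(\alpha\oplus\beta)^{rc}=\beta^{rc}\oplus\alpha^{rc}$ explicitly.
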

\begin{proof}
Recall that the \it reverse\rm\  of a permutation $\sigma=\sigma_1\cdots\sigma_n$ is the permutation $\sigma^{\text{rev}}:=\sigma_n\cdots\sigma_1$,
and the \it complement\rm\ of $\sigma$ is the permutation
$\sigma^{\text{com}}$ satisfying
$\sigma^{\text{com}}_i=n+1-\sigma_i,\ i=1,\cdots, n$.
Let $\sigma^{\text{rev-com}}$ denote the permutation obtained by applying reversal and then complementation to $\sigma$ (or equivalently, vice versa).
All of these operations are bijections of $\text{SEP}(n)$; indeed, they all preserve the property of being simultaneously 2413 and 3142 avoiding.
It is easy to see that
$$
B_1^{+,n}(\sigma)=1\ \Leftrightarrow\ B_1^{-,n}(\sigma^{\text{rev}})=n\ \Leftrightarrow\   B_1^{-,n}(\sigma^{\text{com}})=n,\ \sigma\in S_n.
$$
From the above facts, it follows that
\begin{equation}\label{bijections}
\begin{aligned}
&\sigma\to\sigma^{\text{com}}\ \text{is a bijection of}\ \text{SEP}(n)\cap\{B_1^{+,n}=n\}\ \text{to}\ \text{SEP}(n)\cap\{B_1^{-,n}=n\};\\
%&\sigma\to\sigma^{\text{rev}}\ \text{is a bijection of}\ \text{SEP}(n)\cap\{B_1^{+,n}=n\}\ \text{to}\ \text{SEP}(n)\cap\{B_1^{-,n}=n\};\\
& \sigma\to\sigma^{\text{rev-com}}\ \text{is a bijection of}\ \text{SEP}(n)\cap\{B_1^{+,n}=n\}\ \text{to itself}.
\end{aligned}
\end{equation}
Also, it is easy to check that
\begin{equation}\label{tocheck}
\begin{aligned}
&A_n^{+,+}(\sigma)=A_n^{-,-}(\sigma^{\text{com}}),\ \sigma\in S_n;\\
&A_n^{+,-}(\sigma)=A_n^{-,+}(\sigma^{\text{com}});\\
&A_n^{+,-}(\sigma)=A_n^{+,-}(\sigma^{\text{rev}}).
\end{aligned}
\end{equation}
From the latter two equations in \eqref{tocheck}, it follows that
\begin{equation}\label{tocheckagain}
A_n^{+,-}(\sigma)=A_n^{-,+}(\sigma^{\text{rev-com}}).
\end{equation}
Part (i) of the lemma follows from the first line of \eqref{bijections} and the first line of \eqref{tocheck}; part (ii) follows from the first line of \eqref{bijections} and the second line of \eqref{tocheck};
and part (iii) follows from the second line of \eqref{bijections} and \eqref{tocheckagain}.
\end{proof}

\section{Generating functions related to the mean of $A_n$}\label{genfuncsmean}

Define
\begin{equation}\label{c'sd's}
\begin{aligned}
&c_n^{\pm,\pm}=E_n^{\text{sep}}(A_n^{\pm,\pm}|B_1^{+,n}=n);
&d_n^{\pm,\pm}=E_n^{\text{sep}}(A_n^{\pm,\pm}|B_1^{-,n}=n).
\end{aligned}
\end{equation}
Define the generating functions
\begin{equation}\label{Ggenmean}
\begin{aligned}
&G^{+,-}(t)=\sum_{n=1}^\infty s_nc_n^{+,-}t^n;\\
&G^{-,-}(t)=\sum_{n=1}^\infty s_nc_n^{-,-}t^n.
\end{aligned}
\end{equation}

We will prove the following theorem.
\begin{theorem}\label{Ggenmeanthm}
\begin{equation}\label{Ggenmeanform}
\begin{aligned}
& G^{+,-}(t)=\frac{t(1-t)}{\sqrt{t^2-6t+1}};\\
&G^{-,-}(t)=\frac{t(1-t)}{\sqrt{t^2-6t+1}}-\frac{t(1-t)}2-\frac t2\sqrt{t^2-6t+1}.
\end{aligned}
\end{equation}
\end{theorem}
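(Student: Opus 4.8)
The plan is to reduce the four typed quantities to three scalar statistics of a permutation and then compute everything with generating functions. First I would record the elementary \emph{turning point} description of the longest alternating subsequence. For $\sigma$ of length $\ge 2$ let $a(\sigma)$ be the length of the longest alternating subsequence of any type, and let $\epsilon_1(\sigma),\epsilon_{\mathrm{last}}(\sigma)\in\{+,-\}$ be the first and last letters of its ascent/descent word. Taking an optimal alternating subsequence to consist of the extreme points of the maximal monotone runs gives $a(\sigma)=1+(\text{number of runs})$ and, for each type $u,v\in\{+,-\}$,
\[ A^{u,v}(\sigma)=a(\sigma)-\mathbf{1}[\epsilon_1(\sigma)\ne u]-\mathbf{1}[\epsilon_{\mathrm{last}}(\sigma)\ne v],\qquad |\sigma|\ge 2, \]
the singleton values being fixed by \eqref{crucialdef}--\eqref{n=1or2}. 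This makes each $A^{u,v}$ a linear combination of $a$ and the two endpoint indicators.

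Next I would use Lemma \ref{revcomplemma} to collapse the conditioning. Parts (ii) and (iii) give $c_n^{+,-}=c_n^{-,+}=d_n^{+,-}=d_n^{-,+}$, whence $c_n^{+,-}=E_n^{\mathrm{sep}}A_n^{+,-}$, so that $G^{+,-}(t)=\sum_{\sigma}A^{+,-}(\sigma)\,t^{|\sigma|}$ is the \emph{unconditioned} weighted count; while $c_n^{-,-}$ is the average of $A^{-,-}$ over the indecomposable permutations, so $G^{-,-}(t)=2\sum_{\sigma\ \mathrm{indec}}A^{-,-}(\sigma)\,t^{|\sigma|}$. Feeding in the turning point formula and evaluating the endpoint sums by the complement symmetry (exactly half of the separable permutations of length $\ge2$ begin with an ascent, and, by reversal, likewise end with one), together with the facts that among separable permutations of length $\ge2$ ``indecomposable'' and ``skew\nobreakdash-decomposable'' coincide and that complement interchanges the two classes while preserving $a$ (so the total-$a$ series over all separable permutations is $2(\widehat a-t)$), both series reduce to
\[ G^{+,-}=2\widehat{a}-s,\qquad G^{-,-}=2\widehat{a}-2t-4B, \]
where $\widehat a(t)=\sum_{\sigma\ \mathrm{indec}}a(\sigma)t^{|\sigma|}$ and $B(t)=\sum_{\sigma\ \mathrm{indec},\,|\sigma|\ge2}\mathbf{1}[\epsilon_1(\sigma)=+]\,t^{|\sigma|}$.

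Then I would compute these two master series from the recursive structure, writing $\beta=s_+(t)=\tfrac14\bigl(1+t-\sqrt{t^2-6t+1}\bigr)$ for the generating function of the (skew\nobreakdash-)indecomposable permutations. For $B$ I decompose an indecomposable $\sigma$ as a skew sum of $\ge2$ skew-indecomposable blocks; refining the count by $(\epsilon_1,\epsilon_{\mathrm{last}})$ yields a small closed linear system in the three series $p_{++},p_{--},p_{+-}=p_{-+}$ (the last equality coming from the reverse--complement bijection of the indecomposable class, which interchanges the two endpoint statistics), whose solution gives $B=\beta^3/(1-\beta)$. For $\widehat a$ I use the bijection sending an indecomposable $\sigma$ of length $\ge2$ to the pair (first skew block $\gamma$, remainder $\rho$) with $\gamma$ skew-indecomposable and $\rho$ separable, together with the single-junction rule
\[ a(\gamma\ominus\rho)=a(\gamma)+a(\rho)-\mathbf{1}[\epsilon_{\mathrm{last}}(\gamma)=-]-\mathbf{1}[\epsilon_1(\rho)=-]. \]
Summing over $\gamma,\rho$ factorizes into the already-known series and produces a single \emph{linear} equation for $\widehat a$, giving
\[ \widehat{a}=\frac{\beta\,(1-4\beta+4\beta^2-2\beta^3)}{(1-\beta)^2\,\sqrt{t^2-6t+1}},\qquad B=\frac{\beta^3}{1-\beta}. \]
Finally I substitute $s=\beta/(1-\beta)$ and $\sqrt{t^2-6t+1}=1+t-4\beta$ and reduce modulo the minimal polynomial $2\beta^2-(1+t)\beta+t=0$, which turns $2\widehat a-s$ and $2\widehat a-2t-4B$ into the asserted expressions.

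I expect the main obstacle to lie not in the final algebra but in the combinatorial bookkeeping in the middle: getting the junction/combination rules exactly right and, above all, handling the singleton convention of \eqref{crucialdef} consistently. The clean single-junction rule above holds only for a \emph{two\nobreakdash-fold} split, and it is precisely to avoid the genuinely messy corrections caused by \emph{interior} singleton blocks that I would insist on the first-block decomposition for the $a$-weighted series rather than expanding the full skew sum of all blocks at once. As a running safeguard against sign errors in the corrections I would match low-order coefficients, e.g.\ $\sum_{|\sigma|=n}A^{+,-}(\sigma)=1,2,10$ for $n=1,2,3$ and $\sum_{\sigma\ \mathrm{indec},\,|\sigma|=n}A^{-,-}(\sigma)=2,6$ for $n=2,3$.
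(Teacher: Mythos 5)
Your proposal is correct, and it reaches \eqref{Ggenmeanform} by a genuinely different route than the paper. The paper works probabilistically: it conditions on the size $j$ of the first indecomposable block, uses the concatenation identities \eqref{A++cond} and \eqref{A-+cond} to derive recursions for the conditional means $c_n^{\pm,\pm},d_n^{\pm,\pm}$, collapses these via Lemma \ref{revcomplemma} to the pair $(c^{+,-},c^{-,-})$, and then solves the resulting \emph{coupled} system \eqref{genfuncequa--}--\eqref{genfuncequa+-}. You instead linearize at the level of individual permutations via the run/turning-point identity $A^{u,v}(\sigma)=a(\sigma)-\mathbf{1}[\epsilon_1\ne u]-\mathbf{1}[\epsilon_{\mathrm{last}}\ne v]$, which decouples the four types entirely; the complement and reverse--complement symmetries (the same bijections as in \eqref{bijections}) then reduce everything to the two series $\widehat a$ and $B$ over the indecomposable class, with $B$ obtained from the skew-block structure and $\widehat a$ from a single linear equation rather than a $2\times2$ system. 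I verified the pieces: the turning-point identity holds for $|\sigma|\ge2$ (trimming the canonical run-extrema subsequence gives the lower bound, a front/back extension argument the upper bound, and \eqref{crucialdef} is exactly what makes the trimmed singleton cases come out right, as you observe); the reductions $G^{+,-}=2\widehat a-s$ and $G^{-,-}=2\widehat a-2t-4B$ are correct; $B=\beta^3/(1-\beta)$ agrees with the even simpler first-skew-block computation $B=\big((\beta-t)-B\big)s$, i.e.\ $B=\beta(\beta-t)$, the two forms being equal modulo $2\beta^2-(1+t)\beta+t=0$; and your closed form for $\widehat a$ is equivalent to $\tfrac12\big(t(1-t)(t^2-6t+1)^{-1/2}+s(t)\big)$, which is precisely what \eqref{Ggenmeanform} demands (low-order checks $1,2,10,50$ for $G^{+,-}$ and $0,4,12,56$ for $G^{-,-}$ all confirm, including your stated values $1,2,10$ and $2,6$). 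As for what each approach buys: yours is shorter, explains structurally why $G^{-,-}$ differs from $G^{+,-}$ only by the lower-order ``boundary'' series $-\tfrac{t(1-t)}2-\tfrac t2\sqrt{t^2-6t+1}$, and makes the conventions \eqref{crucialdef} inevitable rather than ad hoc; the paper's conditional-expectation recursions, by contrast, extend verbatim to second moments (the products $c_j\,c_{n-j}$ appearing in section \ref{genfuncs2nd}), whereas squaring your identity would force you to track mixed series coupling $a$ with the endpoint indicators, so the paper's machinery is the one that carries over directly to Theorem \ref{var}.
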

\begin{proof}
By \eqref{sym} we have
\begin{equation}\label{exp-cond+cond}
E^{\text{\rm sep}}_nA_n^{\pm,\pm}=\frac12c_n^{\pm,\pm}+\frac12d_n^{\pm,\pm}.
\end{equation}
Conditioning on $\{B_1^{+,n}=j\}, j=1,\cdots, n$,
%or alternatively, on $\{B_1^{-,n}=j\}, j=1,\cdots, n$,
we obtain the  equation
\begin{equation}\label{exp-condj+-}
\begin{aligned}
&E^{\text{\rm sep}}_nA_n^{\pm,\pm}=\sum_{j=1}^nP^{\text{\rm sep}}_n(B_1^{+,n}=j)E{\text{\rm sep}}_n(A_n^{\pm,\pm}|B_1^{+,n}=j).
%&E^{\text{\rm sep}}_nA_n^{\pm,\pm}=\sum_{j=1}^nP^{\text{\rm sep}}_n(B_1^{-,n}=j)E{\text{\rm sep}}_n(A_n^{\pm,\pm}|B_1^{-,n}=j).
\end{aligned}
\end{equation}
%Considering the four choices of $\pm,\pm$, \eqref{exp-condj+-} yields eight equations.
%It turns out that thanks to our definition \eqref{crucialdef},  four of these equations lead to useful recursion equations (and the other four do not).  However, because of Lemma
%\ref{revcomplemma}, the four useful equations reduce to two. We derive these two equations now.
Consider  \eqref{exp-condj+-} with $A_n^{+,+}$. From the definition of $A_n^{+,+}$ and the concatenating structure of separable permutations as manifested in \eqref{rec}, we have
\begin{equation}\label{A++cond}
%\begin{aligned}
A_n^{+,+}|\{B_1^{+,n}=j\}\stackrel{\text{dist}}{=}A_j^{+,-}|\{B_1^{+,j}=j\}+A_{n-j}^{-,+},\ j=1,\cdots, n-1,\ n\ge3.
%&A_n^{+,+}|\{B_1^{+,n}=1\}\stackrel{\text{dist}}{=}1+A_{n-1}^{-,+},\ n\ge2,
%\end{aligned}
\end{equation}
where the random variable on the left hand side is considered under $P^{\text{\rm sep}}_n$,
the random variable $A_j^{+,-}|\{B_1^{+,j}=j\}$ is considered under $P^{\text{\rm sep}}_j$, the random variable $A_{n-j}^{-,+}$
is considered under $P^{\text{\rm sep}}_{n-j}$, and $A_j^{+,-}|\{B_1^{+,j}=j\}$ and $A_{n-j}^{-,+}$ are independent.

To illustrate \eqref{A++cond}, we give three examples of what can occur.  Consider first the  permutation 342178956, which satisfies $B_1^{+,9}(342178956)=4$.
So $n=9$, $j=4$  and $n-j=5$. The length of the longest alternating subsequence that begins and ends with an ascent is six.
Such an alternating subsequence is built from a longest alternating subsequence beginning with an ascent and ending with a descent that appears in
the first $j$ entries of the permutation--- 3421, and then concatenating this with a longest alternating subsequence that begins with a descent and ends with an ascent
in the last $n-j$ entries of the permutation--- 78956. There are two possibilities for the first piece, namely 342 and 341, and there are three
possibilities for the second piece, namely
756 and 856 and 956.
Consider now the permutation 124378956, which satisfies  $B_1^{+,9}(124378956)=1$. So $n=9$,  $j=1$ and $n-j=8$.
The length of the longest alternating subsequence that begins and ends with an ascent is six.
Such an alternating subsequence is built from a longest alternating subsequence beginning with an ascent and ending with a descent that appears in
the first $j$ entries of the permutation--- 1, and   concatenating this with a longest alternating subsequence that begins with a descent and ends with an ascent
in the last $n-j$ entries of the permutation--- 24378956.
 There is one possibility for the first piece, namely 1, and there are three
possibilities for the second piece, namely
43756, 43856 and 43956. Note that for this to work, it was necessary in \eqref{crucialdef} that a singleton be defined as an alternating sequence of the type $(+,-)$.
Finally, consider the permutation 324561789, which satisfies  $B_1^{+,9}(324561789)=6$. So $n=9$, $j=6$ and $n-j=3$.
The length of the longest alternating subsequence that begins and ends with an ascent is four.
Such an alternating subsequence is built from a longest alternating subsequence beginning with an ascent and ending with a descent that appears in
the first $j$ entries of the permutation--- 324561, and then concatenating this with a longest alternating subsequence that begins with a descent and ends with an ascent
in the last $n-j$ entries of the permutation--- 789. There are six possibilities for the first piece, namely 341,351,361,241,251,261, and there are three possibilities for the second piece, namely 7, 8, 9. Note that for this to work, it was necessary in \eqref{crucialdef} that a singleton be defined as  an alternating sequence of the type $(-,+)$.

Now consider \eqref{exp-condj+-} with $A_n^{-,+}$.
From the definition of $A_n^{-,+}$ and the concatenating structure of separable permutations as manifested in \eqref{rec}, we have
\begin{equation}\label{A-+cond}
%\begin{aligned}
A_n^{-,+}|\{B_1^{+,n}=j\}\stackrel{\text{dist}}{=}A_j^{-,-}|\{B_1^{+,j}=j\}+A_{n-j}^{-,+},\ j=1,\cdots, n-1,\ n\ge3.
%&A_n^{+,+}|\{B_1^{+,n}=1\}\stackrel{\text{dist}}{=}1+A_{n-1}^{-,+},\ n\ge2,
%\end{aligned}
\end{equation}
To illustrate \eqref{A-+cond}, we give two examples  of what can occur.  Consider first the  permutation
342178956, which satisfies $B_1^{+,n}(342178956)=4$.    So
$n=9, j=4$ and $n-j=5$.
The length of the longest alternating subsequence that begins with a descent and ends with an ascent is five.
Such an alternating subsequence is built from a longest alternating subsequence beginning with a descent and ending with a descent that appears in
the first $j$ entries of the permutation--- 3421, and then concatenating this with a longest alternating subsequence that begins with a descent and ends with an ascent
in the last $n-j$ entries of the permutation--- 78956. There are four possibilities for the first piece, namely
32, 42, 31, 41,
and
there are three possibilities for the second piece, namely, 756, 856, 956.
Consider now the permutation 145678923, which satisfies  $B_1^{+,9}(145678923)=1$. So $n=9$,  $j=1$ and $n-j=8$.
The length of the longest alternating subsequence that begins with a descent and ends with an ascent is three.
Such an alternating subsequence is built from a longest alternating subsequence beginning with a descent and ending with a descent that appears in
the first $j$ entries of the permutation--- 1, and then concatenating this with a longest alternating subsequence that begins with a descent and ends with an ascent
in the last $n-j$ entries of the permutation--- 45678923. There is one possibility for the first piece, namely the null set, and there are six possibilities for the second piece, namely
$x23$, with $x\in\{4,5,6,7,8,9\}$. Note that for this to work, it was necessary in \eqref{crucialdef} that a singleton be defined \it not\rm\ to be an alternating sequence of the type $(-,-)$.

%To explain the second l11ine in \eqref{A++cond},  note that under the condition $\{B_1^{+,n}(\sigma)=1$, one has $\sigma_1=1$, and this term will automatically
%be included in any longest alternating subsequence that begins and ends with an ascent; hence the 1 on the right hand side.

Taking expectations in \eqref{A++cond}, and using \eqref{exp-condj+-} along with \eqref{exp-cond+cond} and \eqref{Bprob},
we obtain
$$
\begin{aligned}
&E_n^{\text{sep}}A_n^{+,+}=\frac12c_n^{+,+}+\frac12d_n^{+,+}=\frac{s_1s_{n-1}}{s_n}\big(c_1^{+,-}+\frac12c_{n-1}^{-,+}+\frac12d_{n-1}^{-,+}\big)+\\
&\sum_{j=2}^{n-1}\frac{\frac12s_js_{n-j}}{s_n}\big(c_j^{+,-}+\frac12c_{n-j}^{-,+}+\frac12d_{n-j}^{-,+}\big)+\frac12c_n^{+,+}, n\ge3,
\end{aligned}
$$
or equivalently (noting from \eqref{n=1or2} that
$c^{+,-}_1=1$),
\begin{equation}\label{firstkey}
\frac12d_n^{+,+}=\frac{s_1s_{n-1}}{s_n}\big(1+\frac12c_{n-1}^{-,+}+\frac12d_{n-1}^{-,+}\big)+
\sum_{j=2}^{n-1}\frac{\frac12s_js_{n-j}}{s_n}\big(c_j^{+,-}+\frac12c_{n-j}^{-,+}+\frac12d_{n-j}^{-,+}\big), n\ge3.
\end{equation}
Taking expectations in \eqref{A-+cond}, and using  \eqref{exp-condj+-} along with \eqref{exp-cond+cond} and \eqref{Bprob}, we obtain
$$
\begin{aligned}
&E_n^{\text{sep}}A_n^{-,+}=\frac12c_n^{-,+}+\frac12d_n^{-,+}=\frac{s_1s_{n-1}}{s_n}\big(c_1^{-,-}+\frac12c_{n-1}^{-,+}+\frac12d_{n-1}^{-,+}\big)+\\
&\sum_{j=2}^{n-1}\frac{\frac12s_js_{n-j}}{s_n}\big(c_j^{-,-}+\frac12c_{n-j}^{-,+}+\frac12d_{n-j}^{-,+}\big)+\frac12c_n^{-,+}, n\ge3,
\end{aligned}
$$
or equivalently (noting from \eqref{n=1or2} that
$c^{-,-}_1=0$),
\begin{equation}\label{secondkey}
\frac12d_n^{-,+}=\frac{s_1s_{n-1}}{s_n}\big(\frac12c_{n-1}^{-,+}+\frac12d_{n-1}^{-,+}\big)+
\sum_{j=2}^{n-1}\frac{\frac12s_js_{n-j}}{s_n}\big(c_j^{-,-}+\frac12c_{n-j}^{-,+}+\frac12d_{n-j}^{-,+}\big), n\ge3.
\end{equation}

By Lemma \ref{revcomplemma},  we can substitute in equations \eqref{firstkey} and \eqref{secondkey} so that each of them is given only in terms of
$c_{\cdot}^{-,-}$ and $c_{\cdot}^{+,-}$. Indeed, by Lemma \ref{revcomplemma}, we have
\begin{equation}\label{cdconnection}
d_n^{+,+}=c_n^{-,-},\   c_n^{-,+}=c_n^{+,-}, d_n^{-,+}=c_n^{+,-}.
\end{equation}
Using \eqref{cdconnection} to substitute in \eqref{firstkey} and \eqref{secondkey} and recalling that $s_1=1$, we obtain  the two equations
\begin{equation}\label{keypair}
\begin{aligned}
&\frac12c_n^{-,-}=\frac{s_{n-1}}{s_n}\big(1+c_{n-1}^{+,-}\big)+
\sum_{j=2}^{n-1}\frac{\frac12s_js_{n-j}}{s_n}\big(c_j^{+,-}+c_{n-j}^{+,-}\big), n\ge3;\\
&\frac12c_n^{+,-}=\frac{s_{n-1}}{s_n}c_{n-1}^{+,-}+
\sum_{j=2}^{n-1}\frac{\frac12s_js_{n-j}}{s_n}\big(c_j^{-,-}+c_{n-j}^{+,-}\big), n\ge3.
\end{aligned}
\end{equation}
Multiplying \eqref{keypair} by $2s_nt^n$ and summing over $n$ gives
\begin{equation}\label{keywitht}
\begin{aligned}
&\sum_{n=3}^\infty s_nc_n^{-,-}t^n=2t\sum_{n=3}^\infty s_{n-1}\big(1+c_{n-1}^{+,-}\big)t^{n-1}+
\sum_{n=3}^\infty\Big(\sum_{j=2}^{n-1}s_js_{n-j}\big(c_j^{+,-}+c_{n-j}^{+,-}\big)\Big)t^n;\\
&\sum_{n=3}^\infty s_nc_n^{+,-}t^n=2t\sum_{n=3}^\infty s_{n-1}c_{n-1}^{+,-}t^{n-1}+
\sum_{n=3}^\infty\Big(\sum_{j=2}^{n-1}s_js_{n-j}\big(c_j^{-,-}+c_{n-j}^{+,-}\big)\Big)t^n.
\end{aligned}
\end{equation}
We have
\begin{equation}\label{doublesum}
\begin{aligned}
&\sum_{n=3}^\infty\big(\sum_{j=2}^{n-1}s_js_{n-j}c_j^{+,-}\big)t^n=
\sum_{n=3}^\infty\big(\sum_{j=1}^{n-1}s_js_{n-j}c_j^{+,-}\big)t^n-
t\sum_{n=3}^\infty s_{n-1}t^{n-1}=\\
&\sum_{n=2}^\infty\big(\sum_{j=1}^{n-1}s_js_{n-j}c_j^{+,-}\big)t^n-t\sum_{n=2}^\infty s_{n-1}t^{n-1}=\\
&\big(\sum_{n=1}^\infty s_nc_n^{+,-}t^n\big)
\big(\sum_{n=1}^\infty s_nt^n\big)-t\sum_{n=2}^\infty s_{n-1}t^{n-1},
\end{aligned}
\end{equation}
and similarly,
\begin{equation}\label{doublesumagain}
\begin{aligned}
&\sum_{n=3}^\infty\big(\sum_{j=2}^{n-1}s_js_{n-j}c_{n-j}^{+,-}\big)t^n=
\sum_{n=2}^\infty\big(\sum_{j=1}^{n-1}s_js_{n-j}c_{n-j}^{+,-}\big)t^n-t\sum_{n=2}^\infty s_{n-1}c_{n-1}^{+,-}t^{n-1}=\\
&\big(\sum_{n=1}^\infty s_nt^n\big)
\big(\sum_{n=1}^\infty s_nc_n^{+,-}t^n\big)-t\sum_{n=2}^\infty s_{n-1}c_{n-1}^{+,-}t^{n-1}.
\end{aligned}
\end{equation}
Also, noting from \eqref{n=1or2} that $c_1^{-,-}=0$, we have
\begin{equation}\label{doublesumthirdtime}
\sum_{n=3}^\infty\big(\sum_{j=2}^{n-1}s_js_{n-j}c_j^{-,-}\big)t^n=\sum_{n=2}^\infty\big(\sum_{j=1}^{n-1}s_js_{n-j}c_j^{-,-}\big)t^n=
\big(\sum_{n=1}^\infty s_nc_n^{-,-}t^n\big)\big(\sum_{n=1}^\infty s_nt^n\big).
\end{equation}

Since the conditional probability measure $P_2^{\text{sep}}(\cdot\ |B_1^{+,2}=2)$ gives probability one to the permutation $21$, it follows that
$c_2^{-,-}=2$. From \eqref{n=1or2},     $c_2^{+,-}=1$,  $c_1^{+,-}=1$ and $c_1^{-,-}=0$. Using these facts along with the fact that $s_1=1$, $s_2=2$,  it follows from \eqref{keywitht}-\eqref{doublesumthirdtime} and \eqref{Ggenmean} that
\begin{equation}\label{genfuncequa--}
\begin{aligned}
&G^{-,-}(t)-4t^2=2t(s(t)-t)+2t(G^{+,-}(t)-t)+G^{+,-}(t)s(t)-ts(t)+\\
&G^{+,-}(t)s(t)-tG^{+,-}(t)
\end{aligned}
\end{equation}
and
\begin{equation}\label{genfuncequa+-}
G^{+,-}(t)-2t^2-t=2t(G^{+,-}(t)-t)+G^{-,-}(t)s(t)+
G^{+,-}(t)s(t)-tG^{+,-}(t).
\end{equation}
The equation in \eqref{genfuncequa--} simplifies
to
\begin{equation}\label{G--}
G^{-,-}(t)=\big(2s(t)+t\big)G^{+,-}(t)+ts(t).
\end{equation}
Using \eqref{G--} to substitute for $G^{-,-}(t)$ in \eqref{genfuncequa+-}, and performing some algebra, we obtain
\begin{equation}\label{G+-}
G^{+,-}(t)=\frac{t(1+s^2(t))}{1-t-s(t)-2s^2(t)-ts(t)}.
\end{equation}

From \eqref{sgenfunc}, we have
\begin{equation}\label{s^2}
s^2(t)=\frac12\big(t^2-4t+1-(1-t)\sqrt{t^2-6t+1}\big).
\end{equation}
Using \eqref{sgenfunc} and \eqref{s^2}, we obtain
\begin{equation}\label{denomG+-}
1-t-s(t)-2s^2(t)-ts(t)=-\frac12\big(t^2-6t+1)-\frac12(t-3)\sqrt{t^2-6t+1}
\end{equation}
and
\begin{equation}\label{numeratorG+-}
s^2(t)+1=\frac32-2t+\frac12t^2-\frac{1-t}2\sqrt{t^2-6t+1}.
\end{equation}
Substituting \eqref{denomG+-} and \eqref{numeratorG+-} in \eqref{G+-}, and multiplying the numerator and denominator by $-2$ yields
\begin{equation}\label{G+-again}
G^{+,-}(t)=\frac{t\big(-3+4t-t^2+(1-t)\sqrt{t^2-6t+1}\thinspace\big)}{t^2-6t+1+(t-3)\sqrt{t^2-6t+1}}.
\end{equation}
Writing $-3+4t-t^2=1-(t-2)^2=(1-t+2)(1+t-2)=(t-3)(1-t)$, we have
\begin{equation}\label{simplificationG+-}
\begin{aligned}
&\frac{t\big(-3+4t-t^2+(1-t)\sqrt{t^2-6t+1}\thinspace\big)}{t^2-6t+1+(t-3)\sqrt{t^2-6t+1}}=\\
&t(1-t)\frac{t-3+\sqrt{t^2-6t+1}}{t^2-6t+1+(t-3)\sqrt{t^2-6t+1}}=\frac{t(1-t)}{\sqrt{t^2-6t+1}}.
\end{aligned}
\end{equation}
The formula for $G^{+,-}$ in \eqref{Ggenmeanform} follows from \eqref{G+-again} and \eqref{simplificationG+-}.
The formula for $G^{-,-}$ in \eqref{Ggenmeanform} follows from the formula in \eqref{Ggenmeanform} for $G^{+,-}$, \eqref{G--} and \eqref{sgenfunc}, along with a little algebra.
\end{proof}

\section{Proof of Theorem \ref{mean}}\label{meanproof}
To prove the theorem, it suffices to prove \eqref{meanasymp} with $E_n^\text{sep}A_n^{+,-}$ in place of $E_n^\text{sep}A_n$, since
$A_n(\sigma)-A_n^{+,-}(\sigma)\in\{0,1,2\}$, for all $\sigma\in S_n$.
By Lemma \ref{revcomplemma}, $d^{+,-}=c^{-,+}=c^{+,-}$. From this and
 \eqref{exp-cond+cond},  we have $E_n^\text{sep}A_n^{+,-}=c_n^{+,-}$.
Thus, from \eqref{Ggenmean}, it follows that the coefficient of $t^n$ in the power series for $G^{+,-}(t)$ is
$s_nE_n^\text{sep}A_n^{+,-}$. By
Theorem \ref{Ggenmeanthm} along with Proposition \ref{oneover}, which appears in the appendix, the coefficient of $t^n$ in $G(t)$ is
$a_{n-1}-a_{n-2}$, where $a_n$ satisfies \eqref{oneoverasymp}.
Thus, we have
\begin{equation}\label{almostfinalmean}
\begin{aligned}
&s_nE_n^\text{sep}A_n^{+,-}\sim a_{n-1}-a_{n-2}\sim\frac1{2^\frac54\sqrt\pi n^\frac12}
\big((3-2\sqrt2)^{-n+\frac12}-(3-2\sqrt2)^{-n+\frac32}\big)=\\
&\frac1{2^\frac54\sqrt\pi n^\frac12}
(3-2\sqrt2)^{-n+\frac12}(2\sqrt2-2)=\frac1{2^\frac34\sqrt\pi n^\frac12}
(3-2\sqrt2)^{-n+\frac12}(2-\sqrt2).
\end{aligned}
\end{equation}
By  \eqref{snasympappendix}  in Proposition \ref{snasympprop} in the appendix, we have
\begin{equation*}\label{snasymp}
s_n\sim\frac1{2^\frac34\sqrt\pi n^\frac32}(3-2\sqrt2)^{-n+\frac12}.
\end{equation*}
Using this with  \eqref{almostfinalmean}, we conclude that
$E_n^\text{sep}A_n^{+,-}\sim (2-\sqrt2)n$, which completes the proof of the theorem.
\hfill $\square$

\section{Generating functions related to the second moment of $A_n$}\label{genfuncs2nd}

Define
\begin{equation}\label{C'sD's}
\begin{aligned}
&C_n^{\pm,\pm}=E_n^{\text{sep}}\big((A_n^{\pm,\pm})^2|B_1^{+,n}=n\big);
&D_n^{\pm,\pm}=E_n^{\text{sep}}\big((A_n^{\pm,\pm})^2|B_1^{-,n}=n\big).
\end{aligned}
\end{equation}
Define the generating functions
\begin{equation}\label{Hgenvar}
\begin{aligned}
&H^{+,-}(t)=\sum_{n=1}^\infty s_nC_n^{+,-}t^n;\\
&H^{-,-}(t)=\sum_{n=1}^\infty s_nC_n^{-,-}t^n.
\end{aligned}
\end{equation}

We will prove the following theorem.
\begin{theorem}\label{Hgenvarthm}
\begin{equation}\label{Hgenvarform}
\begin{aligned}
& H^{+,-}(t)=2t^2(1-t)^2\big(t^2-6t+1\big)^{-\frac32}+\frac{-5t^3+8t^2-3t}{t-3}\big(t^2-6t+1\big)^{-\frac12}+\\
&\frac12t^2(1-t)\big(t^2-6t+1\big)^\frac12-\frac12t^2(1-t);\\
&H^{-,-}(t)=H^{+,-}(t)-\frac{-5t^3+8t^2-3t}{t-3}
-\frac12t^2(1-t)^(t^2-6t+1)+\\
&\frac12t^2(1-t)(t^2-6t+1)^\frac12+\frac12t(1-t)-\frac12t(t^2-6t+1)+2t^2(1-t)(t^2-6t+1)^{-\frac12}.
%\big(1-\sqrt{t^2-6t+1}\big)H^{+,-}(t)+ts(t)+2t^2(1-t)(t^2-6t+1)^{-\frac12}+\\
%&2t^2(1-t)^2(t^2-6t+1)^{-1}, \ \text{where}\ s(t)\ \text{is as in}\ \eqref{sgenfunc}.
\end{aligned}
\end{equation}
\end{theorem}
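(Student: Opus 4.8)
The plan is to reproduce the argument of Theorem~\ref{Ggenmeanthm} with first moments replaced by second moments. The input is again the pair of distributional recursions \eqref{A++cond} and \eqref{A-+cond}. Squaring \eqref{A++cond} and exploiting the independence of $A_j^{+,-}\mid\{B_1^{+,j}=j\}$ and $A_{n-j}^{-,+}$, I would obtain, for $1\le j\le n-1$,
\[
E_n^{\text{sep}}\big((A_n^{+,+})^2\mid B_1^{+,n}=j\big)=C_j^{+,-}+2c_j^{+,-}E_{n-j}^{\text{sep}}A_{n-j}^{-,+}+E_{n-j}^{\text{sep}}(A_{n-j}^{-,+})^2,
\]
and the analogous identity from \eqref{A-+cond} with $C_j^{+,-},c_j^{+,-}$ replaced by $C_j^{-,-},c_j^{-,-}$. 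The new ingredient relative to the mean computation is the cross term $2c_j^{+,-}E_{n-j}^{\text{sep}}A_{n-j}^{-,+}$, which ties the second-moment recursion to the first-moment quantities already determined by Theorem~\ref{Ggenmeanthm}.

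The second step is to reduce everything to the two families $C_\cdot^{+,-},C_\cdot^{-,-}$. By the same distributional symmetries used in Lemma~\ref{revcomplemma}, the second-moment analogues of \eqref{cdconnection} hold, namely $D_n^{+,+}=C_n^{-,-}$, $C_n^{-,+}=C_n^{+,-}$ and $D_n^{-,+}=C_n^{+,-}$, whence $E_{n-j}^{\text{sep}}A_{n-j}^{-,+}=c_{n-j}^{+,-}$ and $E_{n-j}^{\text{sep}}(A_{n-j}^{-,+})^2=C_{n-j}^{+,-}$. Partitioning by $\{B_1^{+,n}=j\}$ via \eqref{Bprob}, \eqref{sym} and the second-moment analogue of \eqref{exp-cond+cond}, and recalling $C_1^{+,-}=c_1^{+,-}=1$, $C_1^{-,-}=c_1^{-,-}=0$, this produces the exact second-moment counterpart of \eqref{keypair}:
\[
\tfrac12C_n^{-,-}=\tfrac{s_{n-1}}{s_n}\big(1+2c_{n-1}^{+,-}+C_{n-1}^{+,-}\big)+\sum_{j=2}^{n-1}\tfrac{\frac12 s_js_{n-j}}{s_n}\big(C_j^{+,-}+2c_j^{+,-}c_{n-j}^{+,-}+C_{n-j}^{+,-}\big),
\]
together with the companion equation for $\tfrac12C_n^{+,-}$ carrying $C_j^{-,-}$ and $c_j^{-,-}$ in the summand.

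Multiplying by $2s_nt^n$ and summing over $n$, the linear terms reproduce the same operators acting on $H^{+,-},H^{-,-}$ that acted on $G^{+,-},G^{-,-}$ in the derivation of \eqref{genfuncequa--}-\eqref{genfuncequa+-}. The cross terms are Cauchy products, handled exactly as in \eqref{doublesum}-\eqref{doublesumthirdtime}: the sum $\sum_j 2s_js_{n-j}c_j^{+,-}c_{n-j}^{+,-}t^n$ contributes $2\big(G^{+,-}(t)\big)^2$ in the $C^{-,-}$ equation and $2G^{-,-}(t)G^{+,-}(t)$ in the $C^{+,-}$ equation, up to the standard boundary corrections at $j=1,n-1$; likewise the shift terms carrying $2c_{n-1}^{+,-}$ bring in $G^{+,-}$ as forcing. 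Since $G^{+,-}$ and $G^{-,-}$ are explicit, all these are known. Solving the resulting $2\times2$ linear system for $H^{+,-}$, eliminating $H^{-,-}$ exactly as \eqref{G--} eliminated $G^{-,-}$, expresses $H^{+,-}(t)$ as an explicit combination of $s(t)$ and the known generating functions $G^{+,-}(t)$, $G^{-,-}(t)$ and their products; the formula for $H^{-,-}$ then follows from the same linear relation connecting the two.

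I expect the main obstacle to be the concluding algebraic simplification. After substituting \eqref{sgenfunc}, \eqref{s^2} and $G^{+,-}(t)=t(1-t)(t^2-6t+1)^{-\frac12}$, the term $\big(G^{+,-}\big)^2=t^2(1-t)^2(t^2-6t+1)^{-1}$ divided by the denominator---which rationalizes to a multiple of $(t^2-6t+1)^{-\frac12}$ plus a constant, as in the reduction \eqref{simplificationG+-}---is what generates the leading $(t^2-6t+1)^{-\frac32}$ term of $H^{+,-}$. The remaining pieces must then be collected into the $(t^2-6t+1)^{-\frac12}$, $(t^2-6t+1)^{\frac12}$ and polynomial parts recorded in \eqref{Hgenvarform}. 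This is lengthy manipulation with nested radicals; in particular the factor $(t-3)$ appearing in \eqref{denomG+-} survives into the denominator of the $(t^2-6t+1)^{-\frac12}$-coefficient $\tfrac{-5t^3+8t^2-3t}{t-3}$, so the division by $(t-3)$ must be carried through without cancellation, and this is the step most prone to arithmetic error.
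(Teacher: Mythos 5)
Your proposal is correct and follows essentially the same route as the paper's proof: squaring the recursions \eqref{A++cond} and \eqref{A-+cond}, using Lemma \ref{revcomplemma} to reduce everything to $C_\cdot^{+,-},C_\cdot^{-,-}$ (your displayed recursion for $\tfrac12C_n^{-,-}$ is exactly \eqref{varrecur--} divided by $2s_n$), converting to generating functions with the cross terms $2\big(G^{+,-}\big)^2$ and $2G^{-,-}G^{+,-}$, eliminating $H^{-,-}$ via the analogue \eqref{H--} of \eqref{G--}, and then rationalizing the same denominator \eqref{denomG+-} through the partial-fraction step \eqref{XY}. Your anticipated structure of the final algebra (the $\big(G^{+,-}\big)^2$ term producing the $(t^2-6t+1)^{-\frac32}$ part, the surviving $(t-3)$ in the coefficient of $(t^2-6t+1)^{-\frac12}$) matches what the paper actually carries out in \eqref{H+-almostexplic}--\eqref{alg2}.
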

\begin{proof}
In several equations below, the notation
$E_n^\text{sep}\big((A_j^{\delta_1,\delta_2}+A_{n-j}^{\delta_3,\delta_4})^2|B_1^{\delta_5,j}=j,B_1^{\delta_6,n-j}=n-j\big)$
is employed, where $\delta_i\in\{+,-\}$, for $i=1,\cdots, 6$. This indicates that
$A_j^{\delta_1,\delta_2}$ and $A_{n-j}^{\delta_3,\delta_4}$ are independent, with $A_j^{\delta_1,\delta_2}$ considered under the measure $E_j^\text{sep}(\cdot\ |B_1^{\delta_5,j}=j)$ and
$A_{n-j}^{\delta_3,\delta_4}$ considered under the measure $E_{n-j}^\text{sep}(\cdot\ |B_1^{\delta_6,n-j}=n-j)$.

We proceed in the manner of the proof of Theorem \ref{mean}.
Using
\eqref{A++cond},
\eqref{Bprob}, \eqref{sym} and \eqref{C'sD's}, and noting from \eqref{n=1or2} that $A_1^{+,-}=1$,
we have
\begin{equation*}
\begin{aligned}
&E_n^\text{sep}(A_n^{+,+})^2=\frac12C_n^{+,+}+\frac12D_n^{+,+}=\\
&\frac{s_1s_{n-1}}{s_n}\Big(\frac12E\big((1+A_{n-1}^{-,+})^2|B_1^{+,n-1}=n-1\big)+
\frac12E\big((1+A_{n-1}^{-,+})^2|B_1^{-,n-1}=n-1\big)\Big)+\\
&\sum_{j=2}^{n-1}\frac12\frac{s_js_{n-j}}{s_n}\frac12E\big((A_j^{+,-}+A_{n-j}^{-,+})^2|B_1^{+,j}=j,B_1^{+,n-j}=n-j\big)+\\
&\sum_{j=2}^{n-1}\frac12\frac{s_js_{n-j}}{s_n}\frac12E\big((A_j^{+,-}+A_{n-j}^{-,+})^2|B_1^{+,j}=j,B_1^{-,n-j}=n-j\big)+\frac12C_n^{+,+},\ n\ge3.
\end{aligned}
\end{equation*}
Expanding the terms in the equation above, cancelling the term $\frac12 C_n^{+,+}$ from both sides and using the notation from \eqref{C'sD's} and \eqref{c'sd's}, we have
\begin{equation*}
\begin{aligned}
&\frac12D_n^{+,+}=\frac{s_1s_{n-1}}{s_n}\big(1+c_{n-1}^{-,+}+\frac12C_{n-1}^{-,+}+d_{n-1}^{-,+}+\frac12D_{n-1}^{-,+}\big)+\\
&\frac14\sum_{j=2}^{n-1}\frac{s_js_{n-j}}{s_n}\big(C_j^{+,-}+2c_j^{+,-}c_{n-j}^{-,+}+C_{n-j}^{-,+}\big)+
\frac14\sum_{j=2}^{n-1}\frac{s_js_{n-j}}{s_n}\big(C_j^{+,-}+2c_j^{+,-}d_{n-j}^{-,+}+D_{n-j}^{-,+}\big),\\
& n\ge3.
\end{aligned}
\end{equation*}
By Lemma \ref{revcomplemma}, $D_n^{+,+}=C_n^{-,-}, D_n^{-,+}=C_n^{+,-}, C_n^{-,+}=C_n^{+,-}, c_n^{-,+}=c_n^{+,-}, d_n^{-,+}=c_n^{+,-}$, for all $n$. Making these substitutions in the equation above,  multiplying
both sides by $2s_n$ and recalling that $s_1=1$ gives
\begin{equation}\label{varrecur--}
s_nC_n^{-,-}=s_{n-1}\big(2+4c_{n-1}^{+,-}+2C_{n-1}^{+,-}\big)+
\sum_{j=2}^{n-1}s_js_{n-j}\big(C_j^{+,-}+2c_j^{+,-}c_{n-j}^{+,-}+C_{n-j}^{+,-}\big), \ n\ge3.
\end{equation}

To derive a second recursion equation similar to the one in \eqref{varrecur--} we consider $A_n^{-,+}$.
Using
\eqref{A-+cond},
\eqref{Bprob}, \eqref{sym} and \eqref{C'sD's} and noting from \eqref{n=1or2} that $A_1^{-,-}=0$,
we have
\begin{equation*}
\begin{aligned}
&E_n^\text{sep}(A_n^{-,+})^2=\frac12C_n^{-,+}+\frac12D_n^{-,+}=\\
&\frac{s_1s_{n-1}}{s_n}\Big(\frac12E\big((A_{n-1}^{-,+})^2|B_1^{+,n-1}=n-1\big)+
\frac12E\big((A_{n-1}^{-,+})^2|B_1^{-,n-1}=n-1\big)\Big)+\\
&\sum_{j=2}^{n-1}\frac12\frac{s_js_{n-j}}{s_n}\frac12E\big((A_j^{-,-}+A_{n-j}^{-,+})^2|B_1^{+,j}=j,B_1^{+,n-j}=n-j\big)+\\
&\sum_{j=2}^{n-1}\frac12\frac{s_js_{n-j}}{s_n}\frac12E\big((A_j^{-,-}+A_{n-j}^{-,+})^2|B_1^{+,j}=j,B_1^{-,n-j}=n-j\big)+\frac12C_n^{-,+},\ n\ge3.
\end{aligned}
\end{equation*}
Expanding the terms in the equation above, cancelling the term $\frac12 C_n^{-,+}$ from both sides and using the notation from \eqref{C'sD's} and \eqref{c'sd's}, we have
\begin{equation*}
\begin{aligned}
&\frac12D_n^{-,+}=\frac{s_1s_{n-1}}{s_n}\big(\frac12C_{n-1}^{-,+}+\frac12D_{n-1}^{-,+}\big)+
\frac14\sum_{j=2}^{n-1}\frac{s_js_{n-j}}{s_n}\big(C_j^{-,-}+2c_j^{-,-}c_{n-j}^{-,+}+C_{n-j}^{-,+}\big)+\\
&\frac14\sum_{j=2}^{n-1}\frac{s_js_{n-j}}{s_n}\big(C_j^{-,-}+2c_j^{-,-}d_{n-j}^{-,+}+D_{n-j}^{-,+}\big),\
n\ge3.
\end{aligned}
\end{equation*}
By Lemma \ref{revcomplemma}, $D_n^{-,+}=C_n^{+,-},  C_n^{-,+}=C_n^{+,-}, c_n^{-,+}=c_n^{+,-}, d_n^{-,+}=c_n^{+,-}$, for all $n$. Making these substitutions in the equation above,  multiplying
both sides by $2s_n$ and recalling that $s_1=1$ gives
\begin{equation}\label{varrecur-+}
s_nC_n^{+,-}=2s_{n-1}C_{n-1}^{+,-}+
\sum_{j=2}^{n-1}s_js_{n-j}\big(C_j^{-,-}+2c_j^{-,-}c_{n-j}^{+,-}+C_{n-j}^{+,-}\big),\ n\ge3.
\end{equation}

Multiplying \eqref{varrecur--} and \eqref{varrecur-+} by $t^n$ and summing over $n$ gives
\begin{equation}\label{key2ndt1}
\begin{aligned}
&\sum_{n=3}^\infty s_nC_n^{-,-}t^n=2t\sum_{n=3}^\infty s_{n-1}t^{n-1}+4t\sum_{n=3}^\infty s_{n-1}c_{n-1}^{+,-}t^{n-1}+
2t\sum_{n=3}^\infty s_{n-1}C_{n-1}^{+,-}t^{n-1}+\\
&\sum_{n=3}^\infty\big(\sum_{j=2}^{n-1}s_js_{n-j}C_j^{+,-}\big)t^n+\sum_{n=3}^\infty\big(\sum_{j=2}^{n-1}s_js_{n-j}C_{n-j}^{+,-}\big)t^n+
2\sum_{n=3}^\infty\big(\sum_{j=2}^{n-1}s_js_{n-j}c_j^{+,-}c_{n-j}^{+,-}\big)t^n
\end{aligned}
\end{equation}
and
\begin{equation}\label{key2ndt2}
\begin{aligned}
&\sum_{n=3}^\infty s_nC_n^{+,-}t^n=
2t\sum_{n=3}^\infty s_{n-1}C_{n-1}^{+,-}t^{n-1}+
\sum_{n=3}^\infty\big(\sum_{j=2}^{n-1}s_js_{n-j}C_j^{-,-}\big)t^n+\\
&\sum_{n=3}^\infty\big(\sum_{j=2}^{n-1}s_js_{n-j}C_{n-j}^{+,-}\big)t^n+
2\sum_{n=3}^\infty\big(\sum_{j=2}^{n-1}s_js_{n-j}c_j^{-,-}c_{n-j}^{+,-}\big)t^n.
\end{aligned}
\end{equation}

We have $s_2=2$. By \eqref{n=1or2}, $C_1^{+,-}=1$ and $C_1^{-,-}=0$. By the explanation in the paragraph after  \eqref{doublesumthirdtime},
% $c_2^{-,-}=2$ and $c_2^{+,-}=1$,
we have
$C_2^{-,-}=4$ and $C_2^{+,-}=1$.
The first and second double sums on the second line of \eqref{key2ndt1} satisfy \eqref{doublesum} and \eqref{doublesumagain} respectively  with $c_{\cdot}^{+,-}$ replaced by
$C_{\cdot}^{+,-}$. The third double sum on the second line of \eqref{key2ndt1} satisfies similarly,
 $$
 \begin{aligned}
& \sum_{n=3}^\infty\big(\sum_{j=2}^{n-1}s_js_{n-j}c_j^{+,-}c_{n-j}^{+,-}\big)t^n=\\
& \big(\sum_{n=1}^\infty s_nc_n ^{+,-}t^n\big)^2
-t\sum_{n=2}^\infty s_{n-1}c_{n-1}^{+,-}t^{n-1}.
 \end{aligned}
 $$
 The double sum on the first line of \eqref{key2ndt2} satisfies \eqref{doublesumthirdtime} with with $c_{\cdot}^{-,-}$ replaced by
$C_{\cdot}^{-,-}$ and the first double sum on the second line of \eqref{key2ndt2} satisfies
\eqref{doublesumagain} with $c_{\cdot}^{+,-}$ replaced by $C_{\cdot}^{+,-}$.
The second double sum on the second line of \eqref{key2ndt2} satisfies similarly
$$
\sum_{n=3}^\infty\big(\sum_{j=2}^{n-1}s_js_{n-j}c_j^{-,-}c_{n-j}^{+,-}\big)t^n=\big(\sum_{n=1}^\infty s_nc_n ^{-,-}t^n\big)\big(\sum_{n=1}^\infty s_nc_n ^{+,-}t^n\big).
$$
Using the above facts  with \eqref{Hgenvar}, we obtain
from \eqref{key2ndt1} and \eqref{key2ndt2}
\begin{equation}\label{Hgenfuncequa--}
\begin{aligned}
&H^{-,-}(t)-8t^2=2t(s(t)-t)+4t\big(G^{+,-}(t)-t\big)+2t\big(H^{+,-}(t)-t\big)+\\
&s(t)H^{+,-}(t)-ts(t)+s(t)H^{+,-}(t)-tH^{+,-}(t)+2(G^{+,-}(t))^2-2tG^{+,-}(t)
\end{aligned}
\end{equation}
and
\begin{equation}\label{Hgenfuncequa+-}
\begin{aligned}
&H^{+,-}(t)-2t^2-t=2t\big(H^{+,-}(t)-t\big)+s(t)H^{-,-}(t)+\\
&s(t)H^{+,-}(t)-tH^{+,-}(t)+2G^{-,-}(t)G^{+,-}(t).
\end{aligned}
\end{equation}
The  equation in \eqref{Hgenfuncequa--} simplifies to
\begin{equation}\label{H--}
H^{-,-}(t)=\big(2s(t)+t\big)H^{+,-}(t)+ts(t)+2tG^{+,-}(t)+2(G^{+,-}(t))^2.
\end{equation}
From \eqref{sgenfunc}, $2s(t)+t=1-\sqrt{t^2-6t+1}$. Using this,
the formula for $H^{-,-}$   in \eqref{Hgenvarform}
follows from  \eqref{H--} after substituting  for $G^{+,-}$ from  \eqref{Ggenmeanform}
 and performing some algebra.

Using \eqref{H--} to substitute for $H^{-,-}(t)$ in \eqref{Hgenfuncequa+-}, and then solving  for $H^{+,-}(t)$, one obtains after some algebra,
\begin{equation}\label{H+-}
H^{+,-}(t)=\frac{t+ts^2(t)+2G^{-,-}(t)G^{+,-}(t)+2ts(t)G^{+,-}(t)+2s(t)(G^{+,-}(t))^2}{1-t-s(t)-2s^2(t)-ts(t)}.
\end{equation}
Substituting for $G^{+,-}(t)$ and $G^{-,-}(t)$ from \eqref{Ggenmeanform},  substituting for $s(t)$ from \eqref{sgenfunc} and substituting for $s^2(t)$ from \eqref{s^2}, and performing a lot of algebra, one finds that
the numerator in \eqref{H+-} satifies
\begin{equation}\label{numeratorH+-}
\begin{aligned}
&t+ts^2(t)+2G^{-,-}(t)G^{+,-}(t)+2ts(t)G^{+,-}(t)+2s(t)(G^{+,-}(t))^2=\\
&\frac52t^3-4t^2+\frac32t-\frac{t(1-t)}2\sqrt{t^2-6t+1}-\frac{t^2(1-t)^2}{\sqrt{t^2-6t+1}}-\frac{t^2(1-t)^2(t-3)}{t^2-6t+1}.
\end{aligned}
\end{equation}

Let
\begin{equation}\label{X}
X=\sqrt{t^2-6t+1}
\end{equation}
 and $Y=t-3$.
 The denominator of $H^{+,-}(t)$ in \eqref{H+-} was calculated in
\eqref{denomG+-}, and is equal to $-\frac12(X^2+YX)$. We have
\begin{equation}\label{XY}
\begin{aligned}
&\frac1{X^2+YX}=\frac1Y(\frac1X-\frac1{X+Y})=\\
&\frac1Y(\frac1X+\frac{Y-X}{X^2-Y^2})=\frac1{t-3}\Big(\frac1{\sqrt{t^2-6t+1}}+\frac{t-3-\sqrt{t^2-6t+1}}{t^2-6t+1-(t-3)^2}\Big)=\\
&\frac1{t-3}\Big(\frac1{\sqrt{t^2-6t+1}}-\frac{t-3-\sqrt{t^2-6t+1}}8\Big)=\\
&\frac1{(t-3)\sqrt{t^2-6t+1}}-\frac18+\frac{\sqrt{t^2-6t+1}}{8(t-3)}.
\end{aligned}
\end{equation}
From  \eqref{H+-}--\eqref{XY}, we obtain
\begin{equation}\label{H+-almostexplic}
\begin{aligned}
&H^{+,-}(t)=-2\Big(\frac1{(t-3)\sqrt{t^2-6t+1}}-\frac18+\frac{\sqrt{t^2-6t+1}}{8(t-3)}\Big)\times\\
&\Big((\frac52t^3-4t^2+\frac32t)-\frac{t(1-t)}2\sqrt{t^2-6t+1}-\frac{t^2(1-t)^2}{\sqrt{t^2-6t+1}}-\frac{t^2(1-t)^2(t-3)}{t^2-6t+1}\Big).
\end{aligned}
\end{equation}

Denote the three expressions in the first parenthetical factor in \eqref{H+-almostexplic} by $\gamma_i, i=1,2,3$, and denote the four expressions in the second parenthetical  factor in
 \eqref{H+-almostexplic} by $\beta_j, j=1,2,3,4$.
 We multiply out the right hand side of \eqref{H+-almostexplic} in the order of the following double sum:
 $-2\sum_{j=1}^4\sum_{i=1}^3\gamma_i\beta_j$. We have
 $\gamma_2\beta_3+ \gamma_3\beta_4=0$. Also, after some algebra, one finds that
 $\big(\gamma_1\beta_3+\gamma_2\beta_4\big)=\frac{t^2(1-t)^2}{8(t-3)}$, which cancels with the term $\gamma_3\beta_3$.
 Writing down the other seven terms in the order of the double sum above,   and recalling the definition of $X$ in \eqref{X}, we obtain
\begin{equation}\label{H+-explic}
\begin{aligned}
&H^{+,-}(t)=\frac{-5t^3+8t^2-3t}{t-3}\frac1X+\frac{5t^3-8t^2+3t}8+\frac{-5t^3+8t^2-3t}{8(t-3)}X+\\
&\frac{t(1-t)}{t-3}-\frac{t(1-t)}8X+\frac{t(1-t)}{8(t-3)}(t^2-6t+1)+
2t^2(1-t)^2\frac1{X^3}.
\end{aligned}
\end{equation}
We have
\begin{equation}\label{alg1}
\begin{aligned}
&\frac{t(1-t)}{t-3}+\frac{t(1-t)}{8(t-3)}(t^2-6t+1)+\frac{5t^3-8t^2+3t}8=\\
&\frac18t(1-t)(t-3)-\frac18(-3t+8t^2-5t^3)=-\frac12t^2(1-t)
\end{aligned}
\end{equation}
and
\begin{equation}\label{alg2}
 \frac{-5t^3+8t^2-3t}{8(t-3)}-\frac{t(1-t)}8=\frac12t^2(1-t).
\end{equation}
Now
\eqref{Hgenvarform} for $H^{+,-}(t)$ follows from \eqref{H+-explic}-\eqref{alg2}.
\end{proof}

\section{Proof of Theorem \ref{var}}\label{varproof}
We first proof part (i). Then we make minor additions to that proof to obtain part (ii).
By \eqref{A+-distributions}, it suffices to consider $A_n^{+,-}$ for part (i).
By \eqref{sym} and \eqref{C'sD's}, we have
$E_n^{\text{sep}}(A_n^{+,-})^2=\frac12C_n^{+,-}+\frac12D_n^{+,-}$, and by
 Lemma \ref{revcomplemma}, $D_n^{+,-}=C_n^{-,+}=C_n^{+,-}$. Therefore,
 $E_n^{\text{sep}}(A_n^{+,-})^2=C_n^{+,-}$.
Thus
$t^n[H^{+,-}]$, the coefficient of $t^n$ in the power series expansion of $H^{+,-}$, is equal to  $s_nE_n^{\text{sep}}(A_n^{+,-})^2$. As noted in
section \ref{meanproof}, $t^n[G^{+,-}]$, the coefficient of $t^n$ in the power series expansion
of $G^{+,-}$, is equal to $s_nE_n^{\text{sep}}A_n^{+,-}$.
Thus
\begin{equation}\label{variance}
\text{Var}^{\text{sep}}_n(A_n^{+,-})=E_n^{\text{sep}}(A_n^{+,-})^2-\big(E_n^{\text{sep}}A_n^{+,-}\big)^2=\frac{t^n[H^{+,-}]}{s_n}-\big(\frac{t^n[G^{+,-}]}{s_n}\big)^2.
\end{equation}
Recall that $r_1=3-2\sqrt2$ denotes the smaller of  the two roots of $t^2-6t+1$.
For some of  the calculations below, it will be convenient to write $r_1$ instead of $3-2\sqrt2$ at certain points.

We first consider the asymptotic behavior of $E_n^{\text{sep}}A_n^{+,-}=\frac{t^n[G^{+,-}]}{s_n}$.
From \ref{Ggenmeanform} in Theorem \ref{Ggenmeanthm} and from   Proposition \ref{oneover} in the appendix, we have
\begin{equation}\label{anan-1}
t^n[G^{+,-}]=a_{n-1}-a_{n-2},
\end{equation}
where $a_n$ satisfies \eqref{oneoverasymprefine}.
From \eqref{oneoverasymprefine},
\begin{equation}\label{ans}
\begin{aligned}
&a_{n-i}=\frac{(3-2\sqrt2)^{-n-\frac12}}{\sqrt\pi}\Big(
\frac1{2^\frac54}\frac{(3-2\sqrt2)^i}{(n-i)^\frac12}+
\frac{3-4\sqrt2}{32\cdot2^\frac34}\frac{(3-2\sqrt2)^i}{(n-i)^\frac32}+o(\frac1{n^\frac32})\Big), i=1,2.
\end{aligned}
\end{equation}
Thus,
\begin{equation}\label{ansagain}
\begin{aligned}
&2^\frac34n^\frac32a_{n-i}=\frac{(3-2\sqrt2)^{-n-\frac12}}{\sqrt\pi}\Big(
\frac{(3-2\sqrt2)^i}{\sqrt2}n(\frac n{n-i})^\frac12+\\
&\frac{(3-4\sqrt2)(3-2\sqrt2)^i}{32}+o(1)\Big), i=1,2.
\end{aligned}
\end{equation}
Using \eqref{anan-1} and \eqref{ansagain}, along  with \eqref{snasymprefine} in the appendix, which gives the asymptotic behavior of $s_n$, we have
\begin{equation}\label{meanA+-}
\begin{aligned}
&E_n^{\text{sep}}A_n^{+,-}=\frac{t^n[G^{+,-}]}{s_n}=\frac1{3-2\sqrt2}\Big(1+\frac{24-9\sqrt2}{32}\frac1n+o(\frac1n)\Big)^{-1}\times\\
&\sum_{i=1}^2(-1)^{i-1}\Big(\frac{(3-2\sqrt2)^i}{\sqrt2}n(\frac n{n-i})^\frac12+
\frac{(3-4\sqrt2)(3-2\sqrt2)^i}{32}+o(1)\Big)=\\
&\frac1{3-2\sqrt2}\Big(1-\frac{24-9\sqrt2}{32}\frac1n+o(\frac1n)\Big)\times\\
&\sum_{i=1}^2(-1)^{i-1}\Big(\frac{(3-2\sqrt2)^i}{\sqrt2}n(1+\frac i{2n})+
\frac{(3-4\sqrt2)(3-2\sqrt2)^i}{32}+o(1)\Big)=\\
&(2-\sqrt2)n-\frac14(3-2\sqrt2)+o(1),
\end{aligned}
\end{equation}
where the last equality follows after some algebra.
This proves \eqref{+-meanrefine}.
From \eqref{meanA+-} we obtain
\begin{equation}\label{firstmomsq}
\big(E_n^{\text{sep}}A_n^{+,-}\big)^2=\big(\frac{t^n[G^{+,-}]}{s_n}\big)^2=(6-4\sqrt2)n^2-\frac{10-7\sqrt2}2n+o(n).
\end{equation}

Now we turn to the asymptotic behavior of
$E_n^{\text{sep}}(A_n^{+,-})^2=\frac{t^n[H^{+,-}]}{s_n}$.
Propositions
 \ref{oneover} and \ref{-32} in the appendix give the asymptotic behavior of the coefficient of $t^n$ in  $\big(t^2-6t+1\big)^{-\frac12}$ and in $\big(t^2-6t+1\big)^{-\frac32}$ respectively, and
 Proposition \ref{snasympprop} gives the asymptotic behavior of the coefficient of $t^n$ in $-\frac12 \big(t^2-6t+1\big)^\frac12$.
By these propositions and \eqref{Hgenvarform} in Theorem \ref{Hgenvarthm}, which gives the formula for $H^{+,-}(t)$, the leading order contribution to the coefficient of $t^n$ in $H^{+,-}(t)$ comes from the leading order contribution to the coefficient of $t^n$ in the term
$2t^2(1-t)^2\big(t^2-6t+1\big)^{-\frac32}$, while the next order contribution to the coefficient  of $t^n$ in $H^{+,-}(t)$ comes from
the next order contribution to the coefficient of $t^n$ in $2t^2(1-t)^2\big(t^2-6t+1\big)^{-\frac32}$ and also from the leading order contribution
to the coefficient of $t^n$ in the term $\frac{-5t^3+8t^2-3t}{t-3}\big(t^2-6t+1\big)^{-\frac12}$.
%It follows from \eqref{-32asymp} that the leading order contribution to the coefficient of $t^n$ in
%$\big(t^2-6t+1\big)^{-\frac32}$ is
%$\frac{n^\frac12}{2^\frac{11}4\sqrt\pi}(3-2\sqrt2)^{-n-\frac32}$

We begin with finding the leading order contribution in the coefficient of $t^n$ in  $\frac{-5t^3+8t^2-3t}{t-3}\big(t^2-6t+1\big)^{-\frac12}$.
Recalling from Proposition \ref{oneover} that $a_n$ denotes the coefficient of $t^n$ in the power series for $(t^2-6t+1)^{-\frac12}$,
we write
\begin{equation}\label{t-3denom}
\frac1{(t-3)\sqrt{t^2-6t+1}}=-\frac13\big(\sum_{n=0}^\infty(\frac t3)^n\big)\big(\sum_{n=0}^\infty a_nt^n\big)=
-\frac13\sum_{n=0}^\infty\Big(\sum_{j=0}^n(\frac13)^ja_{n-j}\Big)t^n.
\end{equation}
We write
\begin{equation}\label{breakup}
\sum_{j=0}^n(\frac13)^ja_{n-j}=\sum_{j=0}^{C\log n}(\frac13)^ja_{n-j}+\sum_{j=C\log n+1}^n(\frac13)^ja_{n-j},
\end{equation}
where $C$ is chosen so that
\begin{equation}\label{Cexp}
\sum_{C\log n+1}^\infty(\frac{3-2\sqrt2}3)^j=o(n^{-\frac12}).
\end{equation}
Considering the asymptotic behavior of $a_n$ in \eqref{oneoverasymp} in the appendix, we write
\begin{equation}\label{123sec6}
\begin{aligned}
&\sum_{j=0}^{C\log n}(\frac13)^ja_{n-j}=\frac{(3-2\sqrt2)^{-n-\frac12}}{2^\frac54\sqrt\pi n^\frac12}\sum_{j=0}^{C\log n}(\frac{3-2\sqrt2}3)^j+\\
&\frac{(3-2\sqrt2)^{-n-\frac12}}{2^\frac54\sqrt\pi}\sum_{j=0}^{C\log n}(\frac{3-2\sqrt2}3)^j\big(\frac1{(n-j)^\frac12}-\frac1{n^\frac12}\big)+\\
&\sum_{j=0}^{C\log n}(\frac13)^j\Big(a_{n-j}-\frac{(3-2\sqrt2)^{-n+j-\frac12}}{2^\frac54\sqrt\pi (n-j)^\frac12}\Big).
\end{aligned}
\end{equation}
From \eqref{123sec6} and \eqref{oneoverasymprefine} in the appendix, and the equality
$\sum_{j=0}^\infty(\frac{3-2\sqrt2}3)^j=\frac3{2\sqrt2}$, it follows that
\begin{equation}\label{est-12termH}
\sum_{j=0}^{C\log n}(\frac13)^ja_{n-j}=\frac{(3-2\sqrt2)^{-n-\frac12}}{2^\frac54\sqrt\pi n^\frac12}\big(\frac3{2\sqrt2}+o(1)\big).
\end{equation}
From \eqref{oneoverasymp}, there exits a $C_1>0$ such that
$a_k\le C_1\frac{(3-2\sqrt2)^{-k}}{k^\frac12}$, $k\in\mathbb{N}$. Using this with \eqref{Cexp}, we obtain
\begin{equation}\label{tailsec6}
\sum_{j=C\log n+1}^n(\frac13)^ja_{n-j}\le C_1(3-2\sqrt2)^{-n}\sum_{j=C\log n+1}^\infty(\frac{3-2\sqrt2}3)^j=o(\frac{(3-2\sqrt2)^{-n}}{n^\frac12}).
\end{equation}
From \eqref{t-3denom}, \eqref{breakup}, \eqref{est-12termH} and \eqref{tailsec6}, we conclude that
the coefficient  $-\frac13\sum_{j=0}^n(\frac13)^ja_{n-j}$ of $t^n$ in the power series for
$\frac1{(t-3)\sqrt{t^2-6t+1}}$ satisfies
\begin{equation}\label{t-3coeff}
-\frac13\sum_{j=0}^n(\frac13)^ja_{n-j}\sim-\frac13\frac{(3-2\sqrt2)^{-n-\frac12}}{2^{\frac54}\sqrt\pi n^\frac12}\frac3{2\sqrt2}=
-\frac{(3-2\sqrt2)^{-n-\frac12}}{4\cdot2^{\frac34}\sqrt\pi n^\frac12}.
\end{equation}
From the above calculations, we conclude that
\begin{equation}\label{I1}
\begin{aligned}
&\text{the leading order contribution in the coefficient of }\ t^n\ \text{in}\\
&\frac{-3t+8t^2-5t^3}{t-3}\big(t^2-6t+1\big)^{-\frac12}\ \text{is}\ \frac{(3-2\sqrt2)^{-n-\frac12}}{4\cdot2^{\frac34}\sqrt\pi }\big(\frac{3r_1}{(n-1)^\frac12}-\frac{8r_1^2}{(n-2)^\frac12}+\frac{5r_1^3}{(n-3)^\frac12}\big)=\\
&\frac{(3-2\sqrt2)^{-n+\frac12}}{4\cdot2^{\frac34}\sqrt\pi}\big(\frac3{(n-1)^\frac12}-\frac{8r_1}{(n-2)^\frac12}+\frac{5r_1^2}{(n-3)^\frac12}\big)
:=I_1.
\end{aligned}
\end{equation}
%$$
%\frac{n^\frac12}{2^\frac{11}4\sqrt\pi}(3-2\sqrt2)^{-n-\frac32}\Big(2(3-2\sqrt2)^2-4(3-2\sqrt2)^3+2(3-2\sqrt2)^4\Big).
%$$
%Thus, the leading order contribution to the coefficient of $t^n$ in
%$\frac{-3t+8t^2-5t^3}{t-3}\big(t^2-6t+1\big)^{-\frac12}$ is

We now consider $2t^2(1-t)^2\big(t^2-6t+1\big)^{-\frac32}$.
Writing $2t^2(1-t)^2=2t^2-4t^3+2t^4$, it follows from Proposition \ref{-32} that
the two leading orders in the  contribution to the coefficient of $t^n$ in $2t^2(1-t)^2\big(t^2-6t+1\big)^{-\frac32}$
 are contained  in
%\begin{equation}\label{leadingorders}
$2\alpha_{n-2}-4\alpha_{n-3}+2\alpha_{n-4}$,
%\end{equation}
where $\alpha_n$ satisfies
\eqref{-32asymp}.
Thus, from \eqref{-32asymp},
\begin{equation}\label{I2}
\begin{aligned}
&\text{the  leading order  contribution to the coefficient of}\ t^n\ \text{in}\\
&2t^2(1-t)^2\big(t^2-6t+1\big)^{-\frac32}\
\text{is given by}\\
&\frac{(3-2\sqrt2)^{-n+\frac12}}{4\cdot2^\frac34\sqrt\pi}
\Big(2(n-2)^\frac12-4r_1(n-3)^\frac12+2r_1^2(n-4)^\frac12\Big):=I_2,
%-\frac18\big(2(n-2)^{-\frac12}-4r_1(n-3)^{-\frac12}+2r_1^2(n-4)^{-\frac12}\big)\Big).
\end{aligned}
\end{equation}
 and
 \begin{equation}\label{I3}
\begin{aligned}
&\text{the second leading order of contribution to the coefficient of}\ t^n\ \text{in}\\
&2t^2(1-t)^2\big(t^2-6t+1\big)^{-\frac32}\
\text{is given by}\\
&\frac{(24-9\sqrt2)(3-2\sqrt2)^{-n+\frac12}}{128\cdot2^\frac34\sqrt\pi}\Big(2(n-2)^{-\frac12}-4r_1(n-3)^{-\frac12}+2r_1^2(n-4)^{-\frac12}\Big):=I_3.
\end{aligned}
\end{equation}

By \eqref{snasymprefine} in the appendix,
 \begin{equation}\label{I4}
\begin{aligned}
& \text{the first two leading order terms in}\ s_n\ \text{are}\\
&\frac1{\sqrt\pi}(3-2\sqrt2)^{-n+\frac12}
\Big(\frac1{2^\frac34}n^{-\frac32}+\frac{12\sqrt2-9}{2^\frac14\cdot32}n^{-\frac52}\Big):=I_4.
\end{aligned}
\end{equation}
 From \eqref{I1}-\eqref{I4}, we conclude that
 \begin{equation}\label{conclude}
 \begin{aligned}
& \text{the first two leading order terms in}\
 E_n^{\text{sep}}(A_n^{+,-})^2=\frac{t^n[H^{+,-}]}{s_n}\\
 & \text{are contained in}\
 \frac{I_1+I_2+I_3}{I_4}\ \text{and are of the form}\ An^2+Bn.
\end{aligned}
\end{equation}

 From \eqref{I1} and \eqref{I4},
 \begin{equation}\label{I1I4}
\begin{aligned}
&\frac{I_1}{I_4}=\frac{\frac1{4\cdot 2^\frac34}\big(\frac3{(n-1)^\frac12}-\frac{8r_1}{(n-2)^\frac12}+\frac{5r_1^2}{(n-3)^\frac12}\big)}{\frac1{2^\frac34}n^{-\frac32}+\frac{12\sqrt2-9}{2^\frac14\cdot32}n^{-\frac52}}=\\
&\frac{\frac n4\big(3(\frac n{n-1})^\frac12-8r_1(\frac n{n-2})^\frac12+5r_1^2(\frac n{n-3})^\frac12\big)}{1+\frac{(12\sqrt2-9)\sqrt2}{32}\frac1n}=\frac{3-8r_1+5r_1^2}4n+O(1)=\\
&\frac{3-8(3-2\sqrt2)+5(3-2\sqrt2)^2}4n+o(n)=(16-11\sqrt2)n+o(n).
\end{aligned}
 \end{equation}
 From \eqref{I2} and \eqref{I4},
 \begin{equation}\label{I2I4}
\begin{aligned}
&\frac{I_2}{I_4}=\frac{\frac1{4\cdot2^\frac34}\big(2(n-2)^\frac12-4r_1(n-3)^\frac12+2r_1^2(n-4)^\frac12\big)}{\frac1{2^\frac34}n^{-\frac32}+\frac{12\sqrt2-9}{2^\frac14\cdot32}n^{-\frac52}}=\\
&\frac{\frac12(\frac{n-2}n)^\frac12-r_1(\frac {n-3}n)^\frac12+\frac12r_1^2(\frac{n-4}n)^\frac12}{1+\frac{(24-9\sqrt2)}{32n}}n^2=\\
&\Big(\frac12(1-\frac1n)-r_1(1-\frac3{2n})+\frac12r_1^2(1-\frac2n)+O(\frac1{n^2})\Big)\Big(1+\frac{9\sqrt2-24}{32n}+O(\frac1{n^2})\Big)n^2=\\
&(6-4\sqrt2)n^2+(-\frac{79}4+\frac{219}{16}\sqrt2)n+O(1).
\end{aligned}
\end{equation}
From \eqref{I3} and \eqref{I4},
\begin{equation}\label{I3I4}
\begin{aligned}
&\frac{I_3}{I_4}=\frac{\frac{24-9\sqrt2}{128\cdot2^\frac34}\big(2(n-2)^{-\frac12}-4r_1(n-3)^{-\frac12}+2r_1^2(n-4)^{-\frac12}\big)}
{\frac1{2^\frac34}n^{-\frac32}+\frac{12\sqrt2-9}{32\cdot 2^\frac14}n^{-\frac52}}=\\
&\frac{\frac{24-9\sqrt2}{64}\Big((\frac n{n-2})^\frac12-2r_1(\frac n{n-3})^\frac12+r_1^2(\frac n{n-4})^\frac12\Big)}{1+\frac{24-9\sqrt2}{32n}}n=\\
&\frac{24-9\sqrt2}{64}\big(1-2r_1+r_1^2\big)n+O(1)=\frac{24-9\sqrt2}{64}(r_1-1)^2n+O(1)=\\
&(\frac{24-9\sqrt2}{64})(2-2\sqrt2)^2n+O(1)=\big(\frac{27}4-\frac{75}{16}\sqrt2\big)n+O(1).
\end{aligned}
\end{equation}

From \eqref{conclude}-\eqref{I3I4} we conclude that
\begin{equation}\label{finalform}
\begin{aligned}
&E_n^{\text{sep}}(A_n^{+,-})^2=\frac{t^n[H^{+,-}]}{s_n}=\\
&(6-4\sqrt2)n^2+\Big(16-11\sqrt2-\frac{79}4+\frac{219}{16}\sqrt2+\frac{27}4-\frac{75}{16}\sqrt2\Big)n+O(1)=\\
&(6-4\sqrt2)n^2+(3-2\sqrt2)n+O(1).
\end{aligned}
\end{equation}
Using \eqref{finalform} with   \eqref{firstmomsq} and \eqref{variance}, we conclude that
\begin{equation}\label{finally}
\begin{aligned}
\text{Var}^{\text{sep}}_n(A_n^{+,-})=(3-2\sqrt2+\frac{10-7\sqrt2}2)n=\frac{16-11\sqrt2}2n\approx0.444\thinspace n,
\end{aligned}
\end{equation}
which proves
\eqref{varasymp}.

We now consider part (ii). By \eqref{A+-distributions}, it suffices to consider $A_n^{-,-}$.
We first consider the asymptotic behavior of
$E_n^{\text{sep}}A_n^{-,-}=\frac{t^n[G^{-,-}]}{s_n}$.
%Similar to \eqref{variance}, we have
%\begin{equation}\label{varianceagain}
%\text{Var}^{\text{sep}}_n(A_n^{-,-})=E_n^{\text{sep}}(A_n^{-,-})^2-\big(E_n^{\text{sep}}A_n^{-,-}\big)^2=\frac{t^n[H^{-,-}]}{s_n}-\big(\frac{t^n[G^{-,-}]}{s_n}\big)^2.
%\end{equation}
From  \eqref{Ggenmeanform}, we have
\begin{equation}\label{G--from+-}
t^n[G^{-,-}]=t^n[G^{+,-}]-t^n[\frac12t(t^2-6t+1)^\frac12],\ n\ge3.
\end{equation}
By
\eqref{bn} and \eqref{bnasymp} in the appendix, we have
\begin{equation}\label{pieceofG--asymp}
t^n[\frac12t(t^2-6t+1)^\frac12]=\frac12b_{n-1}\sim-\frac1{2^\frac34\sqrt\pi}(3-2\sqrt2)^{-n+\frac32}\thinspace\frac1{n^\frac32}.
\end{equation}
From   \eqref{pieceofG--asymp}
and \eqref{snasympappendix} in the appendix, we obtain
\begin{equation}\label{pieceofmeanA--}
-\frac{t^n[\frac12t(t^2-6t+1)^\frac12]}{s_n}=3-2\sqrt2+o(1).
\end{equation}
From \eqref{G--from+-}, \eqref{meanA+-} and
\eqref{pieceofmeanA--}, we conclude that
\begin{equation}\label{meanA--}
E_n^{\text{sep}}A_n^{-,-}=\frac{t^n[G^{-,-}]}{s_n}=
(2-\sqrt2)n+\frac34(3-2\sqrt2)+o(1),
\end{equation}
which proves \eqref{--meanrefine}.

We now turn to the variance. We have
\begin{equation}\label{varianceagain}
\text{Var}^{\text{sep}}_n(A_n^{-,-})=E_n^{\text{sep}}(A_n^{-,-})^2-\big(E_n^{\text{sep}}A_n^{-,-}\big)^2=\frac{t^n[H^{-,-}]}{s_n}-\big(\frac{t^n[G^{-,-}]}{s_n}\big)^2.
\end{equation}
From \eqref{meanA--}, we have
\begin{equation}\label{firstmomsqagain}
(E_n^{\text{sep}}A_n^{-,-})^2=\big(\frac{t^n[G^{-,-}]}{s_n}\big)^2=(6-4\sqrt2)n^2+\frac{30-21\sqrt2}2n+o(n).
\end{equation}
From  \eqref{Hgenvarform}, we have
\begin{equation}\label{H--from+-}
t^n[H^{-,-}]=t^n[H^{+,-}]+t^n[2t^2(1-t)(t^2-6t+1)^{-\frac12}]+\ \text{lower order terms}.
\end{equation}
From Proposition  \ref{oneover} in the appendix,
$t^n[2t^2(1-t)(t^2-6t+1)^{-\frac12}]=2(a_{n-2}-a_{n-3})$. Thus,  using   \eqref{oneoverasymp} from the appendix, or alternatively, using
 \eqref{almostfinalmean} with $n$ replaced by $n-1$,
we have
\begin{equation}\label{pieceofHasymp}
t^n[2t^2(1-t)(t^2-6t+1)^{-\frac12}]=2(a_{n-2}-a_{n-3})\sim\frac{2^\frac14}{\sqrt\pi n^\frac12}(3-2\sqrt2)^{-n+\frac32}(2-\sqrt2).
\end{equation}
From \eqref{pieceofHasymp} and
 \eqref{snasympappendix}, we obtain
\begin{equation}\label{pieceof2ndmomA--}
\frac{t^n[2t^2(1-t)(t^2-6t+1)^{-\frac12}]}{s_n}\sim2(3-2\sqrt2)(2-\sqrt2)n=(20-14\sqrt2)n.
\end{equation}
From  \eqref{H--from+-}, \eqref{pieceof2ndmomA--} and
\eqref{finalform},
we obtain
\begin{equation}\label{final2ndmomA--}
\begin{aligned}
&\frac{t^n[H^{-,-}]}{s_n}=(6-4\sqrt2)n^2+(3-2\sqrt2)n+(20-14\sqrt2)n+o(n)=\\
&(6-4\sqrt2)n^2+(23-16\sqrt2)n+o(n).
\end{aligned}
\end{equation}
From \eqref{varianceagain}, \eqref{firstmomsqagain} and  \eqref{final2ndmomA--}, we conclude that
$$
\text{Var}^{\text{sep}}_n(A_n^{-,-})\sim(23-16\sqrt2)n-\frac{30-21\sqrt2}2n=\frac{16-11\sqrt2}2n,
$$
which proves \eqref{varasympagain}.
\hfill $\square$

\section{Appendix}\label{append}
\begin{proposition}\label{snasympprop}
\begin{equation}\label{snasympappendix}
s_n\sim\frac1{2^\frac34\sqrt\pi}(3-2\sqrt2)^{-n+\frac12}\thinspace n^{-\frac32}.
\end{equation}
With more precision,
\begin{equation}\label{snasymprefine}
s_n=\frac1{\sqrt\pi}(3-2\sqrt2)^{-n+\frac12}
\Big(\frac1{2^\frac34}n^{-\frac32}+\frac{12\sqrt2-9}{2^\frac14\cdot32}n^{-\frac52}+o(n^{-\frac52})\Big).
\end{equation}
\end{proposition}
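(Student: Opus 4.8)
The plan is to read the asymptotics of $s_n$ directly off the closed form \eqref{sgenfunc} by singularity analysis. Since $s(t)=\frac12(1-t)-\frac12\sqrt{t^2-6t+1}$ and the polynomial part $\frac12(1-t)$ affects only $s_0$ and $s_1$, for $n\ge2$ we have $s_n=-\frac12[t^n]\sqrt{t^2-6t+1}$, so everything reduces to the coefficient asymptotics of $b_n=[t^n](t^2-6t+1)^{1/2}$ (the $m=1$ function flagged in the introduction). First I would factor $t^2-6t+1=(1-t/r_1)(1-t/r_2)$, where $r_1=3-2\sqrt2$ and $r_2=3+2\sqrt2$ are the roots, using that their product is $1$ and their sum is $6$; hence $(t^2-6t+1)^{1/2}=(1-t/r_1)^{1/2}(1-t/r_2)^{1/2}$.

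The dominant singularity is the branch point at $t=r_1$ (the root of smaller modulus; the one at $r_2=1/r_1$ contributes only the exponentially negligible $r_2^{-n}=r_1^{n}$). Since $(1-t/r_2)^{1/2}$ is analytic in a disk of radius exceeding $r_1$, the next step is to expand it about $t=r_1$. Setting $u=1-t/r_1$, $A=1-r_1/r_2=1-r_1^2$ and $B=r_1/r_2=r_1^2$, one has $1-t/r_2=A+Bu$, which gives the local singular expansion
\[
(t^2-6t+1)^{1/2}=\sqrt{A}\,(1-t/r_1)^{1/2}+\frac{B}{2\sqrt{A}}\,(1-t/r_1)^{3/2}+O\big((1-t/r_1)^{5/2}\big)
\]
near $t=r_1$. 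I would then invoke the transfer theorem of singularity analysis (legitimate here because the function is algebraic, hence analytic in a slit neighborhood of $r_1$), or equivalently Darboux's method, together with the standard expansion $[t^n](1-t)^{\beta}=\frac{n^{-\beta-1}}{\Gamma(-\beta)}\big(1+\frac{\beta(\beta+1)}{2n}+O(n^{-2})\big)$ and $[t^n](1-t/r_1)^{\beta}=r_1^{-n}[t^n](1-t)^{\beta}$, applied at $\beta=\frac12$ and $\beta=\frac32$. Using $\Gamma(-\tfrac12)=-2\sqrt\pi$ and $\Gamma(-\tfrac32)=\tfrac43\sqrt\pi$ yields a two-term expansion for $b_n$, whence
\[
s_n=\frac{r_1^{-n}}{\sqrt\pi}\Big(\tfrac{\sqrt A}{4}\,n^{-3/2}+\big(\tfrac{3\sqrt A}{32}-\tfrac{3B}{16\sqrt A}\big)n^{-5/2}+o(n^{-5/2})\Big).
\]

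The remaining work is purely algebraic: collapsing the surds to the stated constants. The leading term checks at once, since $A=1-r_1^2=12\sqrt2-16=4\sqrt2\,r_1=2^{5/2}r_1$, so $\frac{\sqrt A}{4}=2^{-3/4}\sqrt{r_1}$ and the $n^{-3/2}$ coefficient becomes $\frac{r_1^{-n+1/2}}{2^{3/4}\sqrt\pi}$, which is exactly \eqref{snasympappendix}. I expect the main obstacle to be the second-order coefficient: it combines the genuinely new $(1-t/r_1)^{3/2}$ contribution with the $\frac{3}{8n}$ correction to the coefficients of the $(1-t/r_1)^{1/2}$ term, and one must verify that $\frac{3\sqrt A}{32}-\frac{3B}{16\sqrt A}$, after substituting $A=2^{5/2}r_1$, $B=r_1^2$, and $r_1=3-2\sqrt2$, reduces to $\sqrt{r_1}\,\frac{12\sqrt2-9}{2^{1/4}\cdot32}$, giving \eqref{snasymprefine}. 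This does succeed—factoring out $3\sqrt{r_1}\,2^{-21/4}$ leaves $2^{3/2}-r_1=4\sqrt2-3$, and $3(4\sqrt2-3)=12\sqrt2-9$—but it is delicate surd arithmetic and the one place where a sign or power-of-$2$ slip is most likely to occur.
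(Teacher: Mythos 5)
Your proof is correct, and it takes a genuinely different route from the paper's. The paper never expands locally at the singularity: starting from the same factorization $\sqrt{t^2-6t+1}=\sqrt{1-t/r_1}\,\sqrt{1-t/r_2}$, it forms the Cauchy product of the two binomial series for $\sqrt{1-x}$ and estimates the resulting convolution directly --- splitting the sum over $j$ at $M_n=[C_l\log n]$, applying the refined Stirling estimate $\frac{(2n)!}{(n!)^2 2^{2n}}=\frac1{\sqrt{\pi n}}\big(1-\frac1{8n}+O(\frac1{n^2})\big)$ uniformly on the short range, evaluating $\sum_{j\ge1}\frac1{2j-1}\frac{(2j)!}{(j!)^2 2^{2j}}(r_1/r_2)^j$ in closed form via \eqref{sqrt1-x} and its $j$-weighted companion by differentiation (the terms $I_n$ and $II_n$ of \eqref{2ndlinecoeff-split}), and killing the tail $III_n$ geometrically. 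Your version compresses all of this into the local expansion $\sqrt A\,(1-t/r_1)^{1/2}+\frac{B}{2\sqrt A}(1-t/r_1)^{3/2}+O\big((1-t/r_1)^{5/2}\big)$ plus transfer/Darboux, which is legitimate here exactly as you say: $(1-t/r_2)^{1/2}$ is analytic in $|t|<r_2$, and the algebraic function admits a slit domain at $r_1$. What your route buys: it is much shorter, the two-term expansion falls out of the single standard asymptotic $[t^n](1-t)^{\beta}=\frac{n^{-\beta-1}}{\Gamma(-\beta)}\big(1+\frac{\beta(\beta+1)}{2n}+O(n^{-2})\big)$ at $\beta=\frac12,\frac32$ (your $\frac3{8n}$ correction at $\beta=\frac12$ is precisely the paper's \eqref{firstlinefirstsum+}, and your $\frac{B}{2\sqrt A}u^{3/2}$ term plays the role of the paper's $II_n$), and the identical template dispatches Propositions \ref{oneover} and \ref{-32} ($m=-1,-3$) with no new work. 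What the paper's route buys: it is entirely self-contained, using nothing beyond Stirling's formula and explicit series manipulation, with all uniformity of error terms pinned down by hand rather than delegated to a transfer theorem. Your surd arithmetic, the step you rightly flagged as delicate, checks out: $A=1-r_1^2=12\sqrt2-16=2^{5/2}r_1$ gives $\frac{\sqrt A}4=2^{-3/4}\sqrt{r_1}$ and hence \eqref{snasympappendix}, while $\frac{3\sqrt A}{32}-\frac{3B}{16\sqrt A}=3\sqrt{r_1}\,2^{-21/4}\big(2^{3/2}-r_1\big)=\sqrt{r_1}\,\frac{12\sqrt2-9}{2^{21/4}}$, which is the coefficient in \eqref{snasymprefine} since $2^{\frac14}\cdot32=2^{\frac{21}4}$.
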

\begin{proof}
Denote the two roots of $t^2-6t+1$ by
\begin{equation*}\label{rootsr1r2}
r_1=3-2\sqrt2,\ \ r_2=3+2\sqrt2.
\end{equation*}
Define the sequence $\{b_n\}_{n=0}^\infty$ by
\begin{equation}\label{bn}
\sqrt{t^2-6t+1}=\sum_{n=0}^\infty b_nt^n,\ |t|<3-2\sqrt2.
\end{equation}
Write
\begin{equation}\label{product}
\sqrt{t^2-6t+1}=\sqrt{1-\frac t{r_1}}\thinspace\sqrt{1-\frac t{r_2}}.
\end{equation}
The Taylor series for $\sqrt{1-x}$ is given by
\begin{equation}\label{sqrt1-x}
\sqrt{1-x}=1-\sum_{n=1}^\infty\frac1{2n-1}\frac{(2n)!}{(n!)^22^{2n}}x^n.
\end{equation}
From \eqref{product} and  \eqref{sqrt1-x}, we have
\begin{equation}\label{productagain}
\begin{aligned}
&\sqrt{t^2-6t+1}=1-\sum_{n=1}^\infty\frac1{2n-1}\frac{(2n)!}{(n!)^22^{2n}}\frac1{r_1^n}t^n-\sum_{n=1}^\infty\frac1{2n-1}\frac{(2n)!}{(n!)^22^{2n}}\frac1{r_2^n}t^n+\\
&\sum_{n=2}^\infty\Big(\sum_{j=1}^{n-1} \frac1{2(n-j)-1}\frac{(2n-2j)!}{((n-j)!)^22^{2(n-j)}}\frac1{r_1^{n-j}}\frac1{2j-1}\frac{(2j)!}{(j!)^22^{2j}}\frac1{r_2^j}\Big)t^n.
\end{aligned}
\end{equation}

We write the coefficient of $t^n$ in the second line of \eqref{productagain} as
\begin{equation}\label{2ndlinecoeff}
\begin{aligned}
&\sum_{j=1}^{n-1} \frac1{2(n-j)-1}\frac{(2n-2j)!}{((n-j)!)^22^{2(n-j)}}\frac1{r_1^{n-j}}\frac1{2j-1}\frac{(2j)!}{(j!)^22^{2j}}\frac1{r_2^j}=\\
&\frac1{r_1^n}\sum_{j=1}^{n-1} \frac1{2(n-j)-1}\frac{(2n-2j)!}{((n-j)!)^22^{2(n-j)}}\frac1{2j-1}\frac{(2j)!}{(j!)^22^{2j}}(\frac{r_1}{r_2})^j.
\end{aligned}
\end{equation}
From \eqref{sqrt1-x}  we have
\begin{equation}\label{convexpremainder}
\begin{aligned}
&\sum_{j=1}^\infty\frac1{2j-1}\frac{(2j)!}{(j!)^22^{2j}}(\frac{r_1}{r_2})^j=1-\sqrt{1-\frac{r_1}{r_2}}
=1-\frac{2^\frac54}{\sqrt{3+2\sqrt2}}=
1-2^\frac54\sqrt{3-2\sqrt2}.
\end{aligned}
\end{equation}
From Stirling's formula, $n!\sim n^ne^{-n}\sqrt{2\pi n}$, one obtains the well-known asymptotic formula
\begin{equation}\label{stirling}
\frac{(2n)!}{(n!)^22^{2n}}\sim\frac1{\sqrt{\pi n}}.
\end{equation}
By \eqref{stirling}, the expression   $\frac1{2j-1}\frac{(2j)!}{(j!)^22^{2j}}$ multiplying $(\frac{r_1}{r_2})^j$ in \eqref{convexpremainder} is bounded in $j$.
 Consequently, for some $C>0$,
 \begin{equation}\label{someC}
 \sum_{j=M}^\infty\frac1{2j-1}\frac{(2j)!}{(j!)^22^{2j}}(\frac{r_1}{r_2})^j\le C\sum_{j=M}^\infty (\frac{r_1}{r_2})^j,
\text{for any}\ M\in\mathbb{N}.
\end{equation}
 From this  we conclude that
 \begin{equation}\label{geomseries}
 \begin{aligned}
 &\text{for any}\ l\in\mathbb{N},\ \text{there exists a constant}\ C_l>0\ \text{such that
 if}\  M_n=[C_l\log n]\in\mathbb{N},\ \text{then}\\
 &\sum_{j=M_n+1}^\infty\frac1{2j-1}\frac{(2j)!}{(j!)^22^{2j}}(\frac{r_1}{r_2})^j\le n^{-l}.
\end{aligned}
\end{equation}
By \eqref{stirling},
\begin{equation}\label{n-jasymp}
\begin{aligned}
& \frac1{2(n-j)-1}\frac{(2n-2j)!}{((n-j)!)^22^{2(n-j)}}\sim\frac1{2\sqrt{\pi }}\frac1{n^\frac32},\ \text{uniformly over }\ j\in\{1,\cdots, [C_l\log n]\}\\
& \text{as}\ n\to\infty,
\end{aligned}
\end{equation}
From  \eqref{2ndlinecoeff}, \eqref{convexpremainder}, \eqref{geomseries} and \eqref{n-jasymp}, it follows that
the coefficient of $t^n$ in the second line of \eqref{productagain} satisfies
\begin{equation}\label{2ndlinecoeffasymp}
\begin{aligned}
&\sum_{j=1}^{n-1} \frac1{2(n-j)-1}\frac{(2n-2j)!}{((n-j)!)^22^{2(n-j)}}\frac1{r_1^{n-j}}\frac1{2j-1}\frac{(2j)!}{(j!)^22^{2j}}\frac1{r_2^j}\sim\\
&\big(1-2^\frac54\sqrt{3-2\sqrt2}\big)\frac1{2\sqrt\pi}\frac1{n^\frac32}\frac1{r_1^n}.
\end{aligned}
\end{equation}

From \eqref{stirling} or \eqref{n-jasymp}, the coefficient of $t^n$ in the first sum on the first line of \eqref{productagain} satisfies
\begin{equation}\label{firstlinefirstsum}
\frac1{2n-1}\frac{(2n)!}{(n!)^22^{2n}}\frac1{r_1^n}\sim\frac1{2\sqrt\pi}\frac1{n^\frac32}\frac1{r_1^n}.
\end{equation}
Since $r_2>r_1$, the coefficient of $t^n$ in the second sum on the first line of \eqref{productagain} is exponentially smaller than that of the first sum on the first line of \eqref{productagain}.
Using this last fact with \eqref{bn}, \eqref{productagain}, \eqref{2ndlinecoeffasymp} and \eqref{firstlinefirstsum}, it follows that
\begin{equation}\label{bnasymp}
b_n\sim-2^\frac14\frac1{\sqrt\pi}(3-2\sqrt2)^{-n+\frac12}\thinspace\frac1{n^\frac32}.
\end{equation}
Now  \eqref{snasympappendix} follows from \eqref{bnasymp}, \eqref{bn} and \eqref{sgenfunc}.

We now turn to \eqref{snasymprefine}.
A refined form of Stirling's formula gives
$n!=n^ne^{-n}\sqrt{2\pi n}\big(1+\frac1{12 n}+O(\frac1{n^2})\big)$  .
Thus, recalling \eqref{stirling}, we  obtain
\begin{equation}\label{stirlingrefine}
\frac{(2n)!}{(n!)^22^{2n}}=\frac1{\sqrt{\pi n}}\frac{1+\frac1{24n}+O(\frac1{n^2})}{\big(1+\frac1{12n}+O(\frac1{n^2})\big)^2}=\frac1{\sqrt{\pi n}}\big(1-\frac1{8n}+O(\frac1{n^2})\big).
\end{equation}
Thus, the coefficient of $t^n$ in the first sum on the first line of \eqref{productagain} satisfies
\begin{equation}\label{firstlinefirstsum+}
\begin{aligned}
&\frac1{2n-1}\frac{(2n)!}{(n!)^22^{2n}}\frac1{r_1^n}=\frac{2n}{2n-1}\thinspace\frac1{2\sqrt\pi}\frac1{n^\frac32}\frac1{r_1^n}\big(1-\frac1{8n}+O(\frac1{n^2})\big)=\\
&\big(1+\frac1{2n}+O(\frac1{n^2})\big)\thinspace\frac1{2\sqrt\pi}\frac1{n^\frac32}\frac1{r_1^n}\big(1-\frac1{8n}+O(\frac1{n^2})\big)=\\
&\frac1{2\sqrt\pi}\frac1{n^\frac32}\frac1{r_1^n}\big(1+\frac3{8n}+O(\frac1{n^2})\big).
\end{aligned}
\end{equation}
As before, we don't need to consider the coefficient of $t^n$ in the second sum on the first line of \eqref{productagain} since it is exponentially smaller than that of the first sum on the right hand side
of \eqref{productagain}.

Now consider the coefficient of $t^n$ in the second line of
\eqref{productagain} as given by the right hand side of \eqref{2ndlinecoeff}.
We write this as
\begin{equation}\label{2ndlinecoeff-split}
\begin{aligned}
&\frac1{r_1^n}\sum_{j=1}^{n-1} \frac1{2(n-j)-1}\frac{(2n-2j)!}{((n-j)!)^22^{2(n-j)}}\frac1{2j-1}\frac{(2j)!}{(j!)^22^{2j}}(\frac{r_1}{r_2})^j=\\
&\frac1{r_1^n} \frac1{2n-1}\frac{(2n)!}{(n!)^22^{2n}}\sum_{j=1}^{n-1}\frac1{2j-1}\frac{(2j)!}{(j!)^22^{2j}}(\frac{r_1}{r_2})^j+\\
&\frac1{r_1^n}\sum_{j=1}^{M_n}\Big( \frac1{2(n-j)-1}\frac{(2n-2j)!}{((n-j)!)^22^{2(n-j)}}-\frac1{2n-1}\frac{(2n)!}{(n!)^22^{2n}}\Big)\frac1{2j-1}\frac{(2j)!}{(j!)^22^{2j}}(\frac{r_1}{r_2})^j+\\
&\frac1{r_1^n}\sum_{j=M_n}^{n-1}\Big( \frac1{2(n-j)-1}\frac{(2n-2j)!}{((n-j)!)^22^{2(n-j)}}-\frac1{2n-1}\frac{(2n)!}{(n!)^22^{2n}}\Big)\frac1{2j-1}\frac{(2j)!}{(j!)^22^{2j}}(\frac{r_1}{r_2})^j:=\\
&\frac1{r_1^n}\big(I_n+II_n+III_n\big),
\end{aligned}
\end{equation}
where $M_n=[C_l\log n]$ with $C_l$ is as in \eqref{geomseries} and $l>\frac52$.
Using  \eqref{convexpremainder},  \eqref{someC}  and \eqref{firstlinefirstsum+},
 we conclude that
 \begin{equation}\label{I}
\begin{aligned}
& I_n=\big(1-2^\frac54\sqrt{3-2\sqrt2}+R_n\big)\frac1{2\sqrt\pi}\frac1{n^\frac32}\big(1+\frac3{8n}+O(\frac1{n^2})\big)=\\
&\big(1-2^\frac54\sqrt{3-2\sqrt2}\big)\frac1{2\sqrt\pi}\frac1{n^\frac32}\big(1+\frac3{8n}+O(\frac1{n^2})\big),
\end{aligned}
 \end{equation}
where $R_n$ decays exponentially.

By the same reasoning that led to \eqref{someC},
 for some $C>0$,
 $|III_n|\le C\sum_{j=M_n}^\infty (\frac{r_1}{r_2})^j$. Thus,
 \begin{equation}\label{III}
 \begin{aligned}
 &\text{for any}\ l\in\mathbb{N},\ \text{there exists a constant}\ C_l>0\ \text{such that
 if we choose}\\
 & M_n=[C_l\log n],\ \text{then}\ |III_n|\le n^{-l}.
\end{aligned}
\end{equation}
(Without loss of generality, we can choose the same constant $C_l$ in   \eqref{geomseries} and \eqref{III}.)
As above, we choose $l>\frac52$.

Consider now $II_n$. We write
\begin{equation}\label{forII}
\begin{aligned}
 &\frac1{2(n-j)-1}\frac{(2n-2j)!}{((n-j)!)^22^{2(n-j)}}-\frac1{2n-1}\frac{(2n)!}{(n!)^22^{2n}}=\frac1{2n-1}\frac{(2n)!}{(n!)^22^{2n}}\big(\frac{B_{n-j}}{B_n}-1\big),
\end{aligned}
\end{equation}
where
$B_k=\frac1{2k-1}\frac{(2k)!}{(k!)^22^{2k}}$.
By \eqref{firstlinefirstsum+},
\begin{equation}\label{forIIagain}
\begin{aligned}
&\frac{B_{n-j}}{B_n}=\frac{\frac1{2\sqrt\pi}\frac1{(n-j)^\frac32}\big(1+\frac3{8(n-j)}+O(\frac1{n^2})\big)}
{\frac1{2\sqrt\pi}\frac1{n^\frac32}\big(1+\frac3{8n}+O(\frac1{n^2})\big)}=
(\frac n{n-j})^\frac32\big(1+O(\frac{\log n}{n^2})\big)=\\
&\big(1+\frac32\frac j{n-j}+O(\frac1{n^2})\big)\big(1+O(\frac{\log n}{n^2})\big)= 1+\frac32\frac jn+O(\frac{\log n}{n^2}),\\
&\text{uniformly over}\ j\in\{1,\cdots, C_l\log n\}.
\end{aligned}
\end{equation}
Substituting \eqref{forII} and \eqref{forIIagain} in the formula for $II_n$ in \eqref{2ndlinecoeff-split}, we have
\begin{equation}\label{IInest}
II_n= \frac1{2n-1}\frac{(2n)!}{(n!)^22^{2n}}
\sum_{j=1}^{C_l\log n}\big(\frac32\frac jn+O(\frac{\log n}{n^2})\big) \frac1{2j-1}\frac{(2j)!}{(j!)^22^{2j}}(\frac{r_1}{r_2})^j.
\end{equation}
We write
\begin{equation}\label{intermediate}
\begin{aligned}
&\sum_{j=1}^\infty\big(\frac{3j}2\frac1{2j-1}\frac{(2j)!}{(j!)^22^{2j}}(\frac{r_1}{r_2})^j=
\big[\frac32x\big(\sum_{j=1}^\infty\frac1{2j-1}\frac{(2j)!}{(j!)^22^{2j}}x^j\big)\big]'|_{x=\frac{r_1}{r_2}}=\\
&\frac32x\big(1-(1-x)^\frac12\big)'|_{x=\frac{r_1}{r_2}}=\frac34\frac{r_1}{r_2}(1-\frac{r_1}{r_2})^{-\frac12}=\frac38\frac{r_1^\frac32}{2^\frac14},
\end{aligned}
\end{equation}
where we have used the fact that $r_2=\frac1{r_1}$.
 Using \eqref{geomseries}  with
\eqref{intermediate},
we have
\begin{equation}\label{keyIIn}
\sum_{j=1}^{C_l\log n}\big(\frac32\frac jn\frac1{2j-1}\frac{(2j)!}{(j!)^22^{2j}}(\frac{r_1}{r_2})^j=
\frac1n\big(\frac38\frac{r_1^\frac32}{2^\frac14}+O(n^{-l})\big).
\end{equation}
From \eqref{firstlinefirstsum+}, \eqref{IInest} and \eqref{keyIIn}, we conclude that
\begin{equation}\label{II}
\begin{aligned}
&II_n=\frac1{2\sqrt\pi}\frac1{n^\frac32}\Big(1+\frac3{8n}+O(\frac1{n^2})\Big)\big(\frac38\frac{r_1^\frac32}{2^\frac14}\frac1n+O(\frac{\log n}{n^2})\big)=\\
&\frac3{16\sqrt\pi 2^\frac14}r_1^\frac32\frac1{n^\frac52}+O(\frac1{n^\frac72}).
\end{aligned}
\end{equation}
Now from \eqref{productagain}, \eqref{firstlinefirstsum+}-\eqref{III} and \eqref{II}, we have
\begin{equation}\label{bnrefine}
\begin{aligned}
&b_n=\frac{(3-2\sqrt2)^{-n+\frac12}}{\sqrt\pi}\Big(-\frac{2^\frac14}{n^\frac32}-\frac38\frac{2^{\frac14}}{n^\frac52}+
\frac3{16}\frac{3-2\sqrt2}{2^\frac14}\frac1{n^\frac52}+
o(\frac1{n^\frac52})\Big)=\\
&\frac{(3-2\sqrt2)^{-n+\frac12}}{\sqrt\pi}\Big(-\frac{2^\frac14}{n^\frac32}+\frac{9-12\sqrt2}{16\cdot2^\frac14}\frac1{n^\frac52}+o(\frac1{n^\frac52})\Big).
\end{aligned}
\end{equation}
Now \eqref{snasymprefine} follows from \eqref{bnrefine} and \eqref{sgenfunc}.
\end{proof}

\begin{proposition}\label{oneover}
Let $\{a_n\}_{n=0}^\infty$ denote the coefficients of the power series about zero of
$\frac1{\sqrt{t^2-6t+1}}$\text{\rm:} $\frac1{\sqrt{t^2-6t+1}}=\sum_{n=0}^\infty a_nt^n$. Then
\begin{equation}\label{oneoverasymp}
a_n\sim\frac1{2^\frac54\sqrt\pi n^\frac12}(3-2\sqrt2)^{-n-\frac12}.
\end{equation}
With more precision,
\begin{equation}\label{oneoverasymprefine}
a_n=\frac{(3-2\sqrt2)^{-n-\frac12}}{\sqrt\pi}\Big(\frac1{2^\frac54}\frac1{n^\frac12}+
\frac{3-4\sqrt2}{32\cdot2^\frac34}\frac1{n^\frac32}+o(\frac1{n^\frac32})\Big).
\end{equation}
\end{proposition}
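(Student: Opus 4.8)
The plan is to mimic the proof of Proposition \ref{snasympprop}, replacing the factor $\sqrt{1-x}$ by $(1-x)^{-1/2}$ throughout. First I would factor $t^2-6t+1=(1-t/r_1)(1-t/r_2)$ with $r_1=3-2\sqrt2$, $r_2=3+2\sqrt2$ and $r_1r_2=1$, so that
\begin{equation*}
\frac1{\sqrt{t^2-6t+1}}=\Big(\sum_{k=0}^\infty\frac{(2k)!}{(k!)^22^{2k}}\frac{t^k}{r_1^k}\Big)\Big(\sum_{k=0}^\infty\frac{(2k)!}{(k!)^22^{2k}}\frac{t^k}{r_2^k}\Big),
\end{equation*}
using the binomial series $(1-x)^{-1/2}=\sum_k\frac{(2k)!}{(k!)^22^{2k}}x^k$. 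Taking the Cauchy product and pulling out $r_1^{-n}$ gives
\begin{equation*}
a_n=\frac1{r_1^n}\sum_{j=0}^n\frac{(2(n-j))!}{((n-j)!)^22^{2(n-j)}}\frac{(2j)!}{(j!)^22^{2j}}\Big(\frac{r_1}{r_2}\Big)^j,
\end{equation*}
where, exactly as in \eqref{2ndlinecoeff}, the ratio $r_1/r_2=r_1^2$ is bounded away from $1$, so the $j$-series converges. Its full sum is $(1-r_1/r_2)^{-1/2}=2^{-5/4}(3-2\sqrt2)^{-1/2}$, by the same computation as in \eqref{convexpremainder} (now applied to $(1-x)^{-1/2}$ rather than to $1-\sqrt{1-x}$).

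For the leading order \eqref{oneoverasymp} I would argue as in \eqref{firstlinefirstsum}--\eqref{2ndlinecoeffasymp}: by \eqref{stirling} the factor $\frac{(2(n-j))!}{((n-j)!)^22^{2(n-j)}}\sim(\pi n)^{-1/2}$ uniformly for $j$ up to $C\log n$, while the tail $j>C\log n$ is negligible by the geometric bound \eqref{geomseries}. Hence $r_1^na_n\sim(\pi n)^{-1/2}(1-r_1/r_2)^{-1/2}=2^{-5/4}(\pi n)^{-1/2}(3-2\sqrt2)^{-1/2}$, which is \eqref{oneoverasymp}.

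For the refined estimate \eqref{oneoverasymprefine} I would follow the three-term decomposition \eqref{2ndlinecoeff-split}, writing $\tilde B_k:=\frac{(2k)!}{(k!)^22^{2k}}$ and splitting $r_1^na_n=I_n+II_n+III_n$ with $M_n=[C_l\log n]$, $l>3/2$. Here $I_n=\tilde B_n\sum_{j\ge0}\tilde B_j(r_1/r_2)^j$ up to an exponentially small error, and the refined Stirling formula \eqref{stirlingrefine} gives $\tilde B_n=(\pi n)^{-1/2}(1-\frac1{8n}+O(n^{-2}))$, contributing the $n^{-3/2}$ correction $-\frac1{8n}$ times the leading term. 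The term $III_n$ is negligible by the analogue of \eqref{III}. The one genuinely new computation is $II_n$: since $\tilde B_k\sim(\pi k)^{-1/2}$ scales like $k^{-1/2}$ (rather than $k^{-3/2}$ as in Proposition \ref{snasympprop}), one gets $\tilde B_{n-j}/\tilde B_n=(n/(n-j))^{1/2}(1+O(\log n/n^2))=1+\frac12\frac jn+O(\log n/n^2)$ uniformly for $j\le M_n$, so the exponent is $1/2$ in place of $3/2$. Consequently $II_n$ is governed by $\frac1{2n}\sum_{j\ge1}j\,\tilde B_j(r_1/r_2)^j$, which I would evaluate by differentiation, as in \eqref{intermediate}, via $\sum_{j\ge1}j\,\tilde B_jx^j=x\frac{d}{dx}(1-x)^{-1/2}=\frac x2(1-x)^{-3/2}$ at $x=r_1/r_2$.

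Collecting the $n^{-1/2}$ and $n^{-3/2}$ contributions from $I_n$ and $II_n$ then yields \eqref{oneoverasymprefine}. The main obstacle is the bookkeeping in this last step: the second-order coefficient $\frac{3-4\sqrt2}{32\cdot2^{3/4}}$ arises only after combining the $-\frac1{8n}$ Stirling correction to $I_n$ with the derivative term from $II_n$ and simplifying the resulting surds using $r_1r_2=1$ and $(1-r_1/r_2)^{-1/2}=2^{-5/4}(3-2\sqrt2)^{-1/2}$; care is needed to verify that no $n^{-3/2}$ contribution is lost in the tail $III_n$, which is why $l$ is taken larger than $3/2$.
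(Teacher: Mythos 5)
Your proposal is correct, but it takes a genuinely different route from the paper's own proof of this proposition. In the paper, Proposition \ref{oneover} is proved as a quick corollary of Proposition \ref{-32}: differentiating $(t^2-6t+1)^{-1/2}$ gives $(3-t)(t^2-6t+1)^{-3/2}$, whence $a_n=\frac3n\alpha_{n-1}-\frac1n\alpha_{n-2}$ as in \eqref{aalpha}, and \eqref{oneoverasymprefine} follows by substituting the two-term expansion \eqref{-32asymp} for $\alpha_{n-1},\alpha_{n-2}$ and expanding $(n-i)^{\frac12}/n$ and $1/\big((n-i)^{\frac12}n\big)$. The heavy Cauchy-product analysis you describe---the factorization over $r_1,r_2$, the binomial series, the split $I_n+II_n+III_n$ at $M_n=[C_l\log n]$, and the evaluation of $II_n$ by differentiating the binomial series at $x=r_1/r_2$---is indeed carried out in the paper, but only once, at exponent $-\frac32$ in the proof of Proposition \ref{-32} (compare \eqref{2ndlinecoeff-splitagainagain}--\eqref{alphanfinal}); your plan runs the same machinery directly at exponent $-\frac12$, in exact parallel with the proof of \eqref{snasymprefine}. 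Your constants all check: $(1-r_1/r_2)^{-\frac12}=(4\sqrt2\,r_1)^{-\frac12}=2^{-\frac54}r_1^{-\frac12}$; the Stirling correction in $I_n$ contributes $-\frac1{8\cdot2^{\frac54}}r_1^{-\frac12}n^{-\frac32}=-\frac{2\sqrt2}{32\cdot2^{\frac34}}r_1^{-\frac12}n^{-\frac32}$; and $II_n$ contributes $\frac x4(1-x)^{-\frac32}\big|_{x=r_1^2}\cdot\frac1{\sqrt\pi\,n^{\frac32}}=\frac{\sqrt{r_1}}{32\cdot2^{\frac34}}\cdot\frac1{\sqrt\pi\,n^{\frac32}}$, so the combined $n^{-\frac32}$ coefficient is $\frac{(3-2\sqrt2)-2\sqrt2}{32\cdot2^{\frac34}}=\frac{3-4\sqrt2}{32\cdot2^{\frac34}}$, as required; your choice $l>\frac32$ correctly makes $III_n=o(n^{-\frac32})$. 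As for what each approach buys: your version is self-contained and delivers the easy one-term estimate \eqref{oneoverasymp} (all that Theorem \ref{mean} needs) without any of the refined work, whereas the paper needs Proposition \ref{-32} anyway for the variance, so deriving $a_n$ from $\alpha_n$ lets it avoid writing out the three-term decomposition a second time---at the cost of routing even the crude estimate \eqref{oneoverasymp} through the more elaborate $-\frac32$ analysis.
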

\begin{proof}
We will show that \eqref{oneoverasymprefine} follows readily from \eqref{-32asymp} in Proposition \ref{-32} below.
Of course \eqref{oneoverasymp} is contained in \eqref{oneoverasymprefine}.
However, we note that the proof of \eqref{oneoverasymp} is considerably shorter than the proof of \eqref{oneoverasymprefine},
just as the proof of \eqref{snasympappendix} was considerably shorter than that of \eqref{snasymprefine}.
As was mentioned at the end of section \ref{intro}, for the proof of Theorem \ref{mean} the only results we use from the appendix are \eqref{snasympappendix} and \eqref{oneoverasymp}.

Differentiating gives
$$
\sum_{n=1}^\infty na_nt^{n-1}=(3-t)(t^2-6t+1)^{-\frac32}=(3-t)\sum_{n=0}^\infty\alpha_n t^n,
$$
where $\alpha_n$ is as in Proposition \ref{-32}. From this we obtain
\begin{equation}\label{aalpha}
a_n=\frac3n\alpha_{n-1}-\frac1n\alpha_{n-2}.
\end{equation}

Thus, from \eqref{-32asymp}, we have
\begin{equation}\label{aasymp}
\begin{aligned}
&\alpha_n=\frac{(3-2\sqrt2)^{-n-\frac12}}{\sqrt\pi}\Big(
\frac3{4\cdot2^\frac34}\frac{(n-1)^\frac12}n+
\frac{3(24-9\sqrt2)}{128\cdot2^\frac34}\frac1{(n-1)^\frac12n}\Big)-\\
&\frac{(3-2\sqrt2)^{-n-\frac12}}{\sqrt\pi}\Big(\frac{3-2\sqrt2}{4\cdot2^\frac34}\frac{(n-2)^\frac12}n+
\frac{(3-2\sqrt2)(24-9\sqrt2)}{128\cdot2^\frac34} \frac1{(n-2)^\frac12n}\Big)+\\
&o\Big((3-2\sqrt2)^{-n}\frac1{n^\frac32}\Big).
\end{aligned}
\end{equation}
Writing
$$
\frac{(n-i)^\frac12}n=\frac{n^\frac12(1-\frac in)^\frac12}n=n^{-\frac12}-\frac i2n^{-\frac32}+o(n^{-\frac32}),\ i=1,2,
$$
and
$$
\frac1{(n-i)^\frac12n}=\frac1{n^\frac32}+o(\frac1{n^\frac32}),\ i=1,2,
$$
and substituting in \eqref{aasymp},
we obtain
\begin{equation}
\begin{aligned}
&a_n=\frac{(3-2\sqrt2)^{-n-\frac12}}{\sqrt\pi}\big(\frac3{4\cdot2^\frac34}-\frac{3-2\sqrt2}{4\cdot2^\frac34}\big)n^{-\frac12}+\\
&\frac{(3-2\sqrt2)^{-n-\frac12}}{\sqrt\pi}\big(-\frac12\frac3{4\cdot2^\frac34}+\frac{3(24-9\sqrt2)}{128\cdot2^\frac34}+
\frac{3-2\sqrt2}{4\cdot2^\frac34}-\frac{(3-2\sqrt2)(24-9\sqrt2)}{128\cdot2^\frac34}\big)n^{-\frac32}+\\
&o\Big((3-2\sqrt2)^{-n}\frac1{n^\frac32}\Big)=\frac{(3-2\sqrt2)^{-n-\frac12}}{\sqrt\pi}\Big(\frac1{2^\frac54}\frac1{n^\frac12}+
\frac{3-4\sqrt2}{32\cdot2^\frac34}\frac1{n^\frac32}+o(\frac1{n^\frac32})\Big),
\end{aligned}
\end{equation}
which is \eqref{oneoverasymprefine}.

\end{proof}

\begin{proposition}\label{-32}
Let $\{\alpha_n\}_{n=0}^\infty$ denote the coefficients of the power series about zero of
$\frac1{(t^2-6t+1)^\frac32}$\text{\rm:} $\frac1{(t^2-6t+1)^\frac32}=\sum_{n=0}^\infty \alpha_nt^n$. Then
\begin{equation}\label{-32asymp}
\alpha_n=\frac{(3-2\sqrt2)^{-n-\frac32}}{\sqrt\pi}\Big(\frac1{4\cdot2^\frac34}n^\frac12+\frac{24-9\sqrt2}{128\cdot2^\frac34}\frac1{n^\frac12}+o(\frac1{n^\frac12})\Big).
\end{equation}
%With more precision,
%\begin{equation}\label{-32asymprefine}
%\alpha_n=\frac1{2^\frac{11}4\sqrt\pi}(3-2\sqrt2)^{-n-\frac32}\Big(n^\frac12-\frac18n^{-\frac12}+o(n^{-\frac12})\Big)  .
%\end{equation}
\end{proposition}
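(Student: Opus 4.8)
The plan is to follow the template of the proof of Proposition~\ref{snasympprop}, with the exponent $\frac12$ replaced by $-\frac32$. Let $r_1=3-2\sqrt2$ and $r_2=3+2\sqrt2$ be the two roots of $t^2-6t+1$, so that $r_1r_2=1$ and hence $r_1/r_2=r_1^2<1$. I would first factor
\[
(t^2-6t+1)^{-\frac32}=\Big(1-\tfrac t{r_1}\Big)^{-\frac32}\Big(1-\tfrac t{r_2}\Big)^{-\frac32},
\]
and expand each factor by the binomial series $(1-x)^{-\frac32}=\sum_{k\ge0}c_kx^k$, the analogue of \eqref{sqrt1-x}, where $c_k=\frac{\Gamma(k+\frac32)}{\Gamma(\frac32)\,k!}=2(k+1)\frac{(2k+2)!}{((k+1)!)^2\,2^{2(k+1)}}$. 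Taking the Cauchy product gives
\[
\alpha_n=r_1^{-n}\sum_{j=0}^n c_{n-j}\,c_j\,r_1^{2j}.
\]

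Next, because $c_j\sim\frac2{\sqrt\pi}j^{\frac12}$ grows only polynomially (by \eqref{stirling}) while $r_1^{2j}$ decays geometrically, the weights $c_jr_1^{2j}$ are summable, and the same geometric tail estimate as in \eqref{geomseries} shows that the sum may be truncated to $j\le C_l\log n$ at the cost of an error that is $O(n^{-l})$ relative to $r_1^{-n}$. On the truncated range I would insert the refined asymptotics for $c_{n-j}$. Applying the refined Stirling formula \eqref{stirlingrefine} (with $n$ replaced by $k+1$) exactly as in \eqref{firstlinefirstsum+} yields $c_k=\frac2{\sqrt\pi}k^{\frac12}\big(1+\frac3{8k}+O(k^{-2})\big)$, and then, expanding $(n-j)^{\pm\frac12}$ about $n$ uniformly over $j\le C_l\log n$, one gets
\[
c_{n-j}=\frac2{\sqrt\pi}\Big(n^{\frac12}-\tfrac j2\,n^{-\frac12}+\tfrac38\,n^{-\frac12}\Big)+O\big(\mathrm{poly}(j)\,n^{-\frac32}\big).
\]

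Summing against $c_jr_1^{2j}$ and letting the truncation run to infinity (the omitted tails being negligible) reduces the whole computation to the two convergent series $S_0=\sum_{j\ge0}c_jr_1^{2j}$ and $S_1=\sum_{j\ge0}jc_jr_1^{2j}$, which I would evaluate by differentiating the identity $\sum_{j\ge0}c_jx^j=(1-x)^{-\frac32}$: this gives $S_0=(1-r_1^2)^{-\frac32}$ and $S_1=\frac32r_1^2(1-r_1^2)^{-\frac52}$. The outcome is
\[
\alpha_n=\frac{r_1^{-n}}{\sqrt\pi}\Big(2S_0\,n^{\frac12}+\big(\tfrac34S_0-S_1\big)n^{-\frac12}\Big)+o\big(r_1^{-n}n^{-\frac12}\big),\qquad \tfrac34S_0-S_1=\tfrac34(1-r_1^2)^{-\frac52}(1-3r_1^2).
\]

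The main obstacle is the concluding algebraic simplification: one must show that $2S_0$ and $\frac34S_0-S_1$, after multiplying by $r_1^{\frac32}$ so as to replace $r_1^{-n}$ by $r_1^{-n-\frac32}$, equal the stated constants $\frac1{4\cdot2^{3/4}}$ and $\frac{24-9\sqrt2}{128\cdot2^{3/4}}$. The simplifications are tamed by the identities $\sqrt{r_1}=\sqrt{3-2\sqrt2}=\sqrt2-1$ and $1-r_1^2=(1-r_1)(1+r_1)=4(3\sqrt2-4)$, which turn all the half-integer powers of $r_1$ and of $1-r_1^2$ into expressions in $\sqrt2$ and $2^{3/4}$. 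I expect this bookkeeping, not any conceptual point, to be the most laborious part, precisely as the step from \eqref{snasympappendix} to \eqref{snasymprefine} was in Proposition~\ref{snasympprop}.
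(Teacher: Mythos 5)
Your proposal is correct and takes essentially the same route as the paper's proof: you factor $(t^2-6t+1)^{-\frac32}=(1-t/r_1)^{-\frac32}(1-t/r_2)^{-\frac32}$, take the Cauchy product (your $c_k=\frac{\Gamma(k+\frac32)}{\Gamma(\frac32)k!}$ is exactly the paper's coefficient $\frac{(2k+1)!}{(k!)^2 2^{2k}}$ from \eqref{1-xto-3/2}), truncate at $j\le C_l\log n$ just as in the paper's $I_n+II_n+III_n$ split in \eqref{2ndlinecoeff-splitagainagain}, apply the refined Stirling estimate $c_k=\frac{2k^{1/2}}{\sqrt\pi}\big(1+\frac3{8k}+O(k^{-2})\big)$ of \eqref{stirlingfor32}, and evaluate $S_0=(1-r_1^2)^{-\frac32}$ and $S_1=\frac32 r_1^2(1-r_1^2)^{-\frac52}$ by differentiating the binomial series exactly as in \eqref{usederiv32}. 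The concluding algebra you flagged does work out: with $1-r_1^2=4\sqrt2\,r_1$ and $r_1^{-1}=3+2\sqrt2$ one gets $2S_0=\frac{r_1^{-3/2}}{4\cdot2^{3/4}}$ and $\big(\tfrac34S_0-S_1\big)=\frac{108\sqrt2-150}{128\cdot2^{5/4}}\,r_1^{-5/2}=\frac{24-9\sqrt2}{128\cdot2^{3/4}}\,r_1^{-3/2}$ (using $(108\sqrt2-150)(3+2\sqrt2)=\sqrt2(24-9\sqrt2)$), recovering \eqref{-32asymp}.
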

\begin{proof}
The proof is similar to the proof of  \eqref{snasymprefine}  in Proposition \ref{snasympprop}.
Differentiating
\eqref{sqrt1-x} twice, one obtains
\begin{equation}\label{1-xto-3/2}
(1-x)^{-\frac32}=\sum_{n=0}^\infty\frac{(2n+1)!}{(n!)^22^{2n}}x^n.
\end{equation}
Let $r_1,r_2$ be the roots of $t^2-6t+1$ as  in the proof of Proposition \ref{snasympprop}.
By \eqref{product} and \eqref{1-xto-3/2},
we have
\begin{equation}\label{3/2expand}
\begin{aligned}
&\frac1{(t^2-6t+1)^\frac32}=1+\sum_{n=1}^\infty\frac{(2n+1)!}{(n!)^22^{2n}}\frac1{r_1^n}t^n+
\sum_{n=1}^\infty\frac{(2n+1)!}{(n!)^22^{2n}}\frac1{r_2^n}t^n+\\
&\sum_{n=2}^\infty\Big(\sum_{j=1}^{n-1}\frac{(2n-2j+1)!}{((n-j)!)^22^{2(n-j)}}\frac1{r_1^{n-j}}\frac{(2j+1)!}{(j!)^22^{2j}}\frac1{r_2^j} \Big)t^n.
\end{aligned}
\end{equation}
The coefficient of $t^n$ in the second term on the right hand side of \eqref{3/2expand} is exponentially smaller that of  the
first term, so we can ignore it. Using \eqref{stirlingrefine}, we have
\begin{equation}\label{stirlingfor32}
\begin{aligned}
&\frac{(2n+1)!}{(n!)^22^{2n}}=(2n+1)\frac1{\sqrt{\pi n}}\big(1-\frac1{8n}+O(\frac1{n^2})\big)=(1+\frac1{2n})\frac{2n^\frac12}{\sqrt{\pi}}\big(1-\frac1{8n}+O(\frac1{n^2})\big)=\\
&\frac{2n^\frac12}{\sqrt{\pi}}\big(1+\frac3{8n}+O(\frac1{n^2})\big).
\end{aligned}
\end{equation}
So the coefficient of $t^n$ in the first term on the right hand side of \eqref{3/2expand} is given by
\begin{equation}\label{coefffirstterm32}
\frac{2n^\frac12}{\sqrt{\pi}r_1^n}\big(1+\frac3{8n}+O(\frac1{n^2})\big).
\end{equation}
We split the coefficient of $t^n$ in the second line of \eqref{3/2expand} into three parts, $I_n,II_n,III_n$, just as we did for the coefficient of $t^n$ in the second line of
\eqref{productagain} in \eqref{2ndlinecoeff-split}. Thus, we write this term as
\begin{equation}\label{2ndlinecoeff-splitagainagain}
\begin{aligned}
&\sum_{j=1}^{n-1}\frac{(2n-2j+1)!}{((n-j)!)^22^{2(n-j)}}\frac1{r_1^{n-j}}\frac{(2j+1)!}{(j!)^22^{2j}}\frac1{r_2^j}=\\
&\frac1{r_1^n}\sum_{j=1}^{n-1} \frac{(2n-2j+1)!}{((n-j)!)^22^{2(n-j)}}\frac{(2j+1)!}{(j!)^22^{2j}}(\frac{r_1}{r_2})^j=\\
&\frac1{r_1^n} \frac{(2n+1)!}{(n!)^22^{2n}}\sum_{j=1}^{n-1}\frac{(2j+1)!}{(j!)^22^{2j}}(\frac{r_1}{r_2})^j+\\
&\frac1{r_1^n}\sum_{j=1}^{M_n}\Big(\frac{(2n-2j+1)!}{((n-j)!)^22^{2(n-j)}}-\frac{(2n+1)!}{(n!)^22^{2n}}\Big)\frac{(2j+1)!}{(j!)^22^{2j}}(\frac{r_1}{r_2})^j+\\
&\frac1{r_1^n}\sum_{j=M_n}^{n-1}\Big(\frac{(2n-2j+1)!}{((n-j)!)^22^{2(n-j)}}-\frac{(2n+1)!}{(n!)^22^{2n}}\Big)\frac{(2j+1)!}{(j!)^22^{2j}}(\frac{r_1}{r_2})^j:=\\
&\frac1{r_1^n}\big(I_n+II_n+III_n\big),
\end{aligned}
\end{equation}
where $M_n=[C_l\log n]$, with $C_l$ sufficiently large so that
\begin{equation}\label{IIIagainagain}
 |III_n|\le n^{-l},
\end{equation}
similar to \eqref{III}.
This time, we choose $l>\frac12$.

Similar to the derivation of \eqref{I}, we use
 \eqref{1-xto-3/2} and \eqref{stirlingfor32} to obtain
\begin{equation}\label{Iagainagain}
\begin{aligned}
&I_n=\frac{2n^\frac12}{\sqrt{\pi}}\big(1+\frac3{8n}+O(\frac1{n^2})\big)\big((1-\frac{r_1}{r_2})^{-\frac32}-1+R_n\big)=\\
&\frac{2n^\frac12}{\sqrt{\pi}}\big(1+\frac3{8n}+O(\frac1{n^2})\big)\big(\frac1{8(3-2\sqrt2)^\frac322^\frac34}-1\big),
\end{aligned}
\end{equation}
where $R_n$ decays exponentially.

We now turn to $II_n$. In the present case we have
$$
II_n=\sum_{j=1}^{M_n}\Big(\frac{(2n-2j+1)!}{((n-j)!)^22^{2(n-j)}}-\frac{(2n+1)!}{(n!)^22^{2n}}\Big)\frac{(2j+1)!}{(j!)^22^{2j}}(\frac{r_1}{r_2})^j.
$$
Similar to \eqref{forII} and \eqref{forIIagain}, we have
\begin{equation*}\label{forII-prop3}
\begin{aligned}
 &\frac{(2n-2j+1)!}{((n-j)!)^22^{2(n-j)}}-\frac{(2n+1)!}{(n!)^22^{2n}}=\frac{(2n+1)!}{(n!)^22^{2n}}\big(\frac{B_{n-j}}{B_n}-1\big),
\end{aligned}
\end{equation*}
where now
$B_k=\frac{(2k+1)!}{(k!)^22^{2k}}$.
By  \eqref{stirlingfor32},
\begin{equation*}\label{Bkasymp32}
\begin{aligned}
&\frac{B_{n-j}}{B_n}=\frac{\frac{2(n-j)^\frac12}{\sqrt{\pi}}\big(1+\frac3{8(n-j)}+O(\frac1{n^2})\big)}{\frac{2n^\frac12}{\sqrt{\pi}}\big(1+\frac3{8n}+O(\frac1{n^2})\big)}=
(\frac{n-j}n)^\frac12\big(1+O(\frac{\log n}{n^2})\big)=\\
&\big(1-\frac j{2n}+O(\frac1{n^2})\big)\big(1+O(\frac{\log n}{n^2})\big)=1-\frac j{2n}+O(\frac{\log n}{n^2}),\\
&\text{uniformly over}\ j\in\{1,\cdots, C_l\log n\}.
\end{aligned}
\end{equation*}
Thus, similar to \eqref{IInest}, we obtain
\begin{equation}\label{IInestagainagain}
II_n=\frac{(2n+1)!}{(n!)^22^{2n}}\big(\sum_{j=1}^{C_l\log n}\big(-\frac j{2n}+O(\frac{\log n}{n^2})\big)
\frac{(2j+1)!}{(j!)^22^{2j}}(\frac{r_1}{r_2})^j.
\end{equation}
We write
\begin{equation}\label{usederiv32}
\begin{aligned}
&\sum_{j=1}^\infty\frac j2\frac{(2j+1)!}{(j!)^22^{2j}}(\frac{r_1}{r_2})^j=\big[\frac x2\big(\sum_{j=1}^\infty\frac{(2j+1)!}{(j!)^22^{2j}}x^j\big)'\big]|_{x=\frac{r_1}{r_2}}=\\
&\big[\frac x2\big((1-x)^{-\frac32}-1\big)'\big]|_{x=\frac{r_1}{r_2}}=\frac34\frac{r_1}{r_2}(1-\frac{r_1}{r_2})^{-\frac52}=\frac3{256\cdot2^\frac14(3-2\sqrt2)^\frac12},
\end{aligned}
\end{equation}
where we have used the fact that $r_2=\frac1{r_1}$.
Using \eqref{usederiv32} and \eqref{stirlingfor32} in  \eqref{IInestagainagain}, we obtain, similar to \eqref{II},
\begin{equation}\label{IIagainagain}
\begin{aligned}
&II_n=\frac{2n^\frac12}{\sqrt{\pi}}\big(1+\frac3{8n}+O(\frac1{n^2})\big)\big(-\frac3{256\cdot2^\frac14(3-2\sqrt2)^\frac12}\frac1n+O(\frac{\log n}{n^2})\big)=\\
&-\frac1{\sqrt\pi}\frac3{128\cdot2^\frac14(3-2\sqrt2)^\frac12}\frac1{n^\frac12}+o(\frac1{n^\frac12}).
\end{aligned}
\end{equation}
From \eqref{3/2expand}, \eqref{coefffirstterm32}-\eqref{Iagainagain} and \eqref{IIagainagain}, we obtain
\begin{equation}\label{alphanfinal}
\begin{aligned}
&\alpha_n=\frac{2n^\frac12}{\sqrt{\pi}r_1^n}\big(1+\frac3{8n}+O(\frac1{n^2})\big)+
\frac{2n^\frac12}{\sqrt{\pi}r_1^n}\big(1+\frac3{8n}+O(\frac1{n^2})\big)\big(\frac1{8(3-2\sqrt2)^\frac322^\frac34}-1\big)-\\
&\frac1{\sqrt\pi}\frac3{128\cdot2^\frac14(3-2\sqrt2)^\frac12}\frac1{n^\frac12}=\\
&\frac{(3-2\sqrt2)^{-n-\frac32}}{\sqrt\pi}\Big(\frac1{4\cdot2^\frac34}n^\frac12+\frac3{32\cdot2^\frac34}\frac1{n^\frac12}-
\frac{3(3-2\sqrt2)}{128\cdot2^\frac14}\frac1{n^\frac12}+o(\frac1{n^\frac12})\Big)=\\
&\frac{(3-2\sqrt2)^{-n-\frac32}}{\sqrt\pi}\Big(\frac1{4\cdot2^\frac34}n^\frac12+\frac{24-9\sqrt2}{128\cdot2^\frac34}\frac1{n^\frac12}+o(\frac1{n^\frac12})\Big).
\end{aligned}
\end{equation}

\end{proof}

\end{document}